\documentclass[10pt]{article}

\usepackage{latexsym}
\usepackage{amsbsy}
\usepackage{amssymb}
\usepackage{amsfonts}
\usepackage{amsmath,amsthm}
\usepackage[latin1]{inputenc}
\usepackage{a4wide}

 \newtheorem{thm}{Theorem}[section]
 \newtheorem{defin}[thm]{Definition}
 \newtheorem{lem}[thm]{Lemma}
 \newtheorem{prop}[thm]{Proposition}
 \newtheorem{cor}[thm]{Corollary}
 \theoremstyle{definition} 
 \newtheorem{rem}[thm]{Remark}
 \newtheorem{ex}[thm]{Example}

 \newcommand{\bthm}{\begin{thm}}
 \newcommand{\ethm}{\end{thm}}
 \newcommand{\bd}{\begin{defin}}
 \newcommand{\ed}{\end{defin}}
 \newcommand{\blem}{\begin{lem}}
 \newcommand{\elem}{\end{lem}}
 \newcommand{\bcor}{\begin{cor}}
 \newcommand{\ecor}{\end{cor}}
 \newcommand{\bprop}{\begin{prop}}
 \newcommand{\eprop}{\end{prop}}
 \newcommand{\brem}{\begin{rem} \rm}
 \newcommand{\erem}{\end{rem}}
 \newcommand{\bex}{\begin{ex} \rm}
 \newcommand{\eex}{\end{ex}}

 \newcommand{\pr}{\noindent{\bf Proof. }}
 \newcommand{\ep}{\nolinebreak{\hspace*{\fill}$\Box$ \vspace*{0.25cm}}}

 \newcommand{\beq}{\begin{equation}}
 \newcommand{\eeq}{\end{equation} }

 \newcommand{\bea}{\begin{eqnarray}}
 \newcommand{\eea}{\end{eqnarray}}

 \newcommand{\beas}{\begin{eqnarray*}}
 \newcommand{\eeas}{\end{eqnarray*}}

 \newcommand{\beqs}{\begin{equation*}}
 \newcommand{\eeqs}{\end{equation*}}

 \newcommand{\bi}{\begin{itemize}}
 \newcommand{\ei}{\end{itemize}}

 \newcommand{\ben}{\begin{enumerate}}
 \newcommand{\een}{\end{enumerate}}

 \newcommand{\ba}{\begin{array}}
 \newcommand{\ea}{\end{array}}

 \newcommand{\R}{\mathbb R}
 \newcommand{\N}{\mathbb N}
 \newcommand{\C}{\mathbb C}
 \newcommand{\Z}{\mathbb Z}
 \newcommand{\K}{\mathbb K}
 
 \newcommand{\Rt}{\widetilde{\mathbb R}}
 \newcommand{\Ct}{\widetilde{\mathbb C}}
 \newcommand{\Kt}{\widetilde{\mathbb K}}
 \newcommand{\sig}{\sigma}
 \newcommand{\de}{\delta}
 \newcommand{\la}{\lambda}
 \newcommand{\al}{\alpha}
 \newcommand{\be}{\beta}
 \newcommand{\om}{\omega}
 \newcommand{\col}{\colon}
 \newcommand{\dual}[2]{\langle #1, #2 \rangle}

 \newcommand{\Cinf}{\cC^\infty}

 \newcommand{\cB}{\ensuremath{{\cal B}}}
 \newcommand{\cC}{\ensuremath{{\cal C}}}
 
 \newcommand{\cE}{\ensuremath{{\cal E}}}

 \newcommand{\cN}{\ensuremath{{\cal N}}}

 \newcommand{\eps}{\varepsilon}

\newcommand{\Span}{\ensuremath{\mathrm{span\,}}}




 \begin{document}

 \title{Symplectic modules over Colombeau-generalized numbers
}

 \author{G\"unther H\"ormann
         \footnote{Faculty of Mathematics, University of Vienna,
         Nordbergstr.\ 15, A-1090 Vienna, Austria,
         Electronic mail: guenther.hoermann@univie.ac.at}\\
         Sanja Konjik
         \footnote{Faculty of Sciences, Department of Mathematics and Informatics, University of Novi Sad,
         Trg Dositeja Obradovi\'ca 4, 21000 Novi Sad, Serbia,
         Electronic mail: sanja.konjik@dmi.uns.ac.rs}\\
         Michael Kunzinger
         \footnote{Faculty of Mathematics, University of Vienna,
         Nordbergstr.\ 15, A-1090 Vienna, Austria,
         Electronic mail: michael.kunzinger@univie.ac.at.}
       }

 \date{\today}
 \maketitle

 \begin{abstract}
We study symplectic linear algebra over the ring $\Rt$ of Colombeau generalized numbers. 
Due to the algebraic properties of $\Rt$ it is possible to preserve a number of central results
of classical symplectic linear algebra. In particular, we construct symplectic bases for 
any symplectic form on a free $\Rt$-module of finite rank. Further, we consider the general problem
of eigenvalues for matrices over $\Kt$ ($\K=\R$ or $\C$) and derive normal forms
for Hermitian and skew-symmetric matrices. Our investigations are motivated by 
applications in non-smooth symplectic geometry and the theory of Fourier integral
operators with non-smooth symbols.

 \vskip5pt
 \noindent
 {\bf Mathematics Subject Classification (2010):}
 15A63, 15A18, 15A21, 46F30. 
 \vskip5pt
 \noindent
 {\bf Keywords:} Symplectic linear algebra, symplectic modules, Colombeau generalized numbers, generalized eigenvalues.
 \end{abstract}

 \section{Introduction} \label{sec:intro}

Algebras of generalized functions in the sense of Colombeau constitute a 
valuable tool for studying singular problems including nonlinearities and
have found numerous applications in PDEs, non-smooth differential geometry,
and mathematical physics (cf., e.g., \cite{C:84,C85,NPS98,O:92,book}). 
Over the past few years, increased attention has been given to algebraic
properties of the ring of generalized numbers $\Kt$ (for $\K=\R$ or $\C$), cf.\ 
\cite{AFJ09,AJ,AJOS:08,EbM-LGiAGF,Vernaeve:10,BK:12} and the references therein.

In the present paper we study symplectic linear algebra on free $\Rt$-modules
of finite rank. This naturally leads to further fundamental algebraic notions
like eigenvalues and spectral properties, as well as normal forms for certain
types of matrices over generalized numbers. 

Our investigation of symplectic linear structures here is the starting
point -- in terms of tangent space constructions -- for the systematic
development of non-smooth symplectic differential geometry, which lies at
the heart of deeper applications in microlocal analysis and mechanics or general
relativity on semi-Riemannian manifolds. For example, one of the main current
issues in research on propagation of singularities for (pseudo)differential
operators with non-smooth principal symbol on a manifold $M$ is to understand
 the  precise relation with the bicharacteristic flow on the
cotangent bundle $T^* M$, i.e, the flow of the non-smooth Hamiltonian vector field
steming from the principal symbol as Hamilton function (cf.\ \cite{GH:05,O:09}).
Furthermore, the microlocal mapping properties of solution operators are then
governed by generalized Fourier integral operators with non-smooth phase functions
and symbols and the wave front sets of their kernels, which are described in terms
of generalized Lagrangian submanifolds in appropriate cotangent bundles (cf.\
\cite{GHO:09,GO:12}). As for mechanics and general relativity in the context of
non-smooth metrics or space-times, a prominent problem is to study the geodesic
flow on a generalized semi-Riemannian manifold $(M,g)$. This is  the flow on $T
M$ of the non-smooth geodesic spray $G$ which is the vector field
on $TM$ given in coordinates $(x,v)$ on $TM$ by 
$$ 
   G(x,v) =  \sum_{1 \leq j \leq n}  v_j\, \partial_{x_j} - 
   \sum_{1 \leq j,k,l \leq n} \Gamma^j_{k l}\, v_k\, v_l\, \partial_{v_j}, 
$$
where $\Gamma^j_{k l}$ are the Christoffel symbols (cf.\
\cite{AM:78}). Non-degeneracy of the metric gives rise to the `non-smooth
diffeomorphism' $g^\flat \col TM \to T^* M$ and $\sig := (g^\flat)^* \om$ defines
a non-smooth symplectic form on $TM$ via pull-back of the canonical form $\omega$
on $T^* M$. Locally, the non-smooth symplectic form $\sig$ is explicitly given by
$$
  \sig = \sum_{1 \leq i,j \leq n} g_{ij}\, dx_i \wedge dv_j + 
  \sum_{i,j,k} \frac{\partial g_{ij}}{\partial{x_k}} \, v_i\, dx_j \wedge dx_k.
$$
The geodesic flow can then be studied in terms of a family of
symplectomorphisms on the generalized symplectic manifold $(TM, \sig)$.

The paper is organized as follows. To make the presentation reasonably self-contained, in
Section \ref{sec:gf} we provide basic definitions and present a number of
fundamental
algebraic properties in the Colombeau setting. Section \ref{basicsec} then turns
to symplectic forms on $\Rt$-modules. We prove the existence of symplectic bases and
of symplectic basis extensions and study symplectic maps, as well as symplectic submodules
(isotropic, involutive, Lagrangian). In Section \ref{matrixsec} our main focus lies on 
spectral properties of matrices over $\Kt$.    We study an appropriate notion of eigenvalues 
in the present context. We also derive a 
characterization of eigenvalues in terms of determinants. Finally, in Section \ref{skewsec}
we study Hermitian and skew-symmetric matrices. For these specific types of matrices, 
we show that there is always a distinguished set of eigenvalues, based on which 
normal forms can be derived.

 \section{The ring of generalized numbers} \label{sec:gf}
In this section we collect a number of fundamental properties of the ring of generalized numbers
$\Kt$, defined (for $\K=\R$ or $\K=\C$) as follows  
\bd For $I:=(0,1]$ set
\[
\begin{split}
\cE_M &:= \{(r_\eps)_{\eps\in I}\in \K^I : \exists N\in \N\ |r_\eps| = O(\eps^{-N}) \ (\eps\to 0)\}\\
\cN &:= \{(r_\eps)_{\eps\in I}\in \K^I : \forall m\in \N\ |r_\eps| = O(\eps^{m}) \ (\eps\to 0)\}\\
\Kt &:= \cE_M / \cN
\end{split}
\]
\ed
Nets $(r_\eps)_\eps$ in $\cE_M$ are called moderate, those in $\cN$ negligible. 
We denote by $[(r_\eps)_\eps]$ the class of $(r_\eps)_\eps$ in $\Kt$.
$\Kt$ is a ring with all operations defined componentwise on representatives. 
It is, however, not a field: e.g., $[(\sin(1/\eps))]$ is a zero divisor. In 
fact,  by \cite[Th.\ 2.18]{AJ} or \cite[Th.\ 1.2.39]{book} we have:
\blem\label{zerodiv} For any $r\in\Kt$,
the following are equivalent:
\begin{itemize}
	\item[(i)] $r$ is not invertible.
	\item[(ii)] There exists a representative $(r_\eps)_\eps$ and a zero sequence $\eps_k$ with $r_{\eps_k}=0$ 
	for all $k$.
	\item[(iii)] $r$ is a zero divisor. 
\end{itemize}
\elem
Conversely, $r$ is invertible if and only if it is \emph{strictly nonzero}, i.e., iff there exists some
$m\in \N$ such that $|r_\eps|>\eps^m$ for $\eps$ small. Moreover, an element of $\Rt$ is called
\emph{strictly positive} if there exists some $m\in \N$ such that $r_\eps>\eps^m$ for $\eps$ small
(for further information on the order structure of $\Rt$ we refer to \cite{OPS:07}).

The foundations of the study of the algebraic properties of $\Kt$ were laid in \cite{AJ}. 
Important further milestones in this line of research are \cite{AJOS:08,AFJ09,Vernaeve:10}.
Next we list some fundamental properties of $\Kt$ and refer to the above works for proofs
and further results. Here and below we will use the notation $e_S$ for the equivalence class in $\Kt$ of 
the characteristic function of some $S\subseteq I$ (these classes were first introduced
and studied in \cite{AJ}). Then $e_S+e_{S^c}=1$ and $e_S\not=0$ if and only if $0\in \overline S$.

\begin{itemize}
\item   $\Kt$ is a reduced ring ($x^2=0 \Rightarrow x=0$).  
\item $e\in \Kt$ is idempotent ($e^2 = 1$) if and only if $e=e_S$ for some $S\subseteq I$.
\item $\Kt$ possesses uncountably many maximal ideals.
\item $\Kt$ is a complete topological ring.
\item The maximal ideals in $\Kt$ are precisely the closures of the prime ideals in $\Kt$.
\item Let $J$ be an ideal in $\Kt$. Then the closure of $J$ is the intersection
of all maximal ideals containing $J$.
\item $\Kt$ is {\em not:}
\begin{itemize}
\item Artinian
\item Noetherian
\item von Neumann regular
\end{itemize}
\item Every ideal $J$ in $\Kt$ is convex ($x\in J$, $|y|\le |x|$ $\Rightarrow$ $y\in J$).
\item An ideal $J$ is prime if and only if it is pseudoprime and radical, i.e.:
\begin{itemize}
\item $\forall S\subset (0,1]$: $e_S\in J$ or $e_{S^c}\in J$, and
\item $\forall x\in J$: $\sqrt{|x|}\in J$.
\end{itemize}
\item The minimal prime ideals are precisely the pure prime ideals.
\item The projective ideals are the ideals generated by a
family of mutually orthogonal idempotents.
\end{itemize}

We shall also require the following consequence of \cite[Lemma 2.3]{Vernaeve:10}:

\begin{lem}\label{VernaeveLemma} Let $\alpha, \beta \in \Kt$, then $\alpha\cdot
\beta = 0$ if and only if there exists a subset $S \subseteq I$ such that
$\al \cdot e_S = 0$ and $\be\cdot e_{S^c} = 0$. 
\end{lem}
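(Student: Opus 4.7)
The plan is to prove the two directions separately. The reverse implication needs no construction: if $\alpha e_S = 0$ and $\beta e_{S^c} = 0$, then using the already-noted identity $e_S + e_{S^c} = 1$ in $\Kt$ we simply compute
\[
\alpha\beta \;=\; \alpha\beta(e_S + e_{S^c}) \;=\; \beta(\alpha e_S) + \alpha(\beta e_{S^c}) \;=\; 0.
\]
So the real content of the lemma lies in the forward direction.

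For that, I would pick representatives $(\al_\eps)_\eps$ of $\al$ and $(\be_\eps)_\eps$ of $\be$, and unpack the hypothesis $\al\be=0$ as $(\al_\eps\be_\eps)_\eps\in\cN$. Choosing one admissible witness per exponent and discarding redundancies yields a strictly decreasing zero sequence $(\eta_n)_{n\in\N}$ in $I$ with $|\al_\eps\be_\eps|\le\eps^n$ for all $\eps\in(0,\eta_n]$. The key pointwise observation is a pigeonhole dichotomy: for each $\eps\in(\eta_{n+1},\eta_n]$, either $|\al_\eps|\le\eps^{n/2}$ or $|\be_\eps|\le\eps^{n/2}$, since otherwise the product would exceed $\eps^n$. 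This motivates defining
\[
S \;:=\; \bigcup_{n\in\N}\bigl\{\eps\in(\eta_{n+1},\eta_n] : |\al_\eps|\le\eps^{n/2}\bigr\}\subseteq I,
\]
and letting $e_S\in\Kt$ be the class of its characteristic function.

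To verify $\al\cdot e_S=0$, fix $m\in\N$ and take $\eps\le\eta_{2m}$. Then $\eps\in(\eta_{n+1},\eta_n]$ for some $n\ge 2m$, and if $\eps\in S$ we get $|\al_\eps\chi_S(\eps)|\le\eps^{n/2}\le\eps^m$; otherwise $\chi_S(\eps)=0$ and the estimate is trivial. Hence $(\al_\eps\chi_S(\eps))_\eps\in\cN$. The symmetric argument on $S^c$, using the other branch of the dichotomy, gives $|\be_\eps\chi_{S^c}(\eps)|\le\eps^m$ for $\eps\le\eta_{2m}$, so $\be\cdot e_{S^c}=0$.

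The main obstacle I anticipate is pinpointing the right diagonal exponent: the choice $n/2$ is dictated by the requirement that on $(\eta_{n+1},\eta_n]$ \emph{both} the $S$-branch bound on $\al$ and the $S^c$-branch bound on $\be$ tend to zero faster than any fixed power of $\eps$. Any polynomially decaying rate that splits the exponent $n$ into two pieces each tending to infinity would work; once this balance is fixed, the rest is bookkeeping across the shells $(\eta_{n+1},\eta_n]$. A minor technical point to keep in mind is that different choices of representatives of $\al,\be$ produce different sets $S$, but the lemma only asserts existence of \emph{some} such $S$, so representative-independence is not required.
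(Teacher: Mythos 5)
Your proof is correct. Note that the paper does not actually prove this lemma: it is stated as ``a consequence of Vernaeve (2010), Lemma 2.3'' and left without an internal argument, so there is no paper proof to compare against and your direct derivation fills that gap. The reverse direction is indeed the one-line computation via $e_S + e_{S^c} = 1$. For the forward direction, the shell decomposition $(0,\eta_1] = \bigcup_{n}(\eta_{n+1},\eta_n]$ combined with the pigeonhole dichotomy ($|\al_\eps|\le\eps^{n/2}$ or $|\be_\eps|\le\eps^{n/2}$ on the $n$-th shell, forced by $|\al_\eps\be_\eps|\le\eps^n$) produces the set $S$, and the estimate $|\al_\eps\chi_S(\eps)|\le\eps^{n/2}\le\eps^m$ (resp.\ $|\be_\eps\chi_{S^c}(\eps)|\le\eps^m$) for $\eps\le\eta_{2m}$, where then $n\ge 2m$, gives negligibility of both products. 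The split exponent $n/2$ is the only choice that requires care, and yours works; and you are right that dependence of $S$ on the chosen representatives is harmless, since the claim is existential and $\al e_S$, $\be e_{S^c}$ are well defined as elements of $\Kt$ regardless of representatives.
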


Turning now to linear algebra, we first
recall a basic lemma (\cite[Lemma 1.2.41]{book}). By $M(n,\Kt)$ we denote the set of square 
matrices of size $n$ with entries in $\Kt$. 
 \begin{lem} \label{GKOS_Lemma} Let $A \in M(n,\Kt)$, then the following are
equivalent:
\begin{enumerate}
   \item $A$ is non-degenerate, i.e., $\forall x \in \Kt^n$: $x^t A y = 0$
$\forall y \in \Kt^n$ $\Rightarrow$ $x = 0$.

  \item $A$ is injective as a linear operator on $\Kt^n$.

  \item $A$ is bijective as a linear operator on $\Kt^n$.
  
  \item $\det(A)$ is invertible.
\end{enumerate}

 \end{lem}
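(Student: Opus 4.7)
The plan is to close the cycle of implications (iv) $\Rightarrow$ (iii) $\Rightarrow$ (ii) $\Rightarrow$ (iv) and then relate (i) by a transpose argument, using that $\det(A^t) = \det(A)$ componentwise on representatives and therefore in $\Kt$.

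First, (iv) $\Rightarrow$ (iii): since $\det(A)$ is invertible in $\Kt$, the classical adjugate formula $A^{-1} = \det(A)^{-1}\,\mathrm{adj}(A)$ holds componentwise on representatives and passes to $\Kt$, producing a two-sided inverse of $A$ on $\Kt^n$. The step (iii) $\Rightarrow$ (ii) is trivial.

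Next, (ii) $\Rightarrow$ (iv): suppose $\det(A)$ is not invertible. By Lemma \ref{zerodiv} applied to $\det(A) \in \Kt$, we may choose a representative $(A_\eps)_\eps$ of $A$ together with a zero sequence $\eps_k \to 0$ such that $\det(A_{\eps_k}) = 0$ for every $k$. For each $k$ pick a classical kernel vector $v_{\eps_k} \in \K^n$ of $A_{\eps_k}$ with $|v_{\eps_k}| = 1$; by a pigeonhole argument we may pass to a subsequence along which some fixed coordinate $j$ satisfies $|v_{\eps_k,j}| \geq 1/\sqrt{n}$. Define $v_\eps := v_{\eps_k}$ for $\eps = \eps_k$ and $v_\eps := 0$ otherwise. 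Then $(v_\eps)_\eps$ is bounded, hence moderate, and its $j$-th component is not negligible because it attains values $\geq 1/\sqrt{n}$ along $\eps_k \to 0$. Thus $v := [(v_\eps)_\eps] \neq 0$ in $\Kt^n$, and by construction $Av = 0$, contradicting the injectivity assumption on $A$.

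Finally, for (i) $\Leftrightarrow$ (iv): condition (i) says that $x^t A y = 0$ for all $y \in \Kt^n$ implies $x = 0$. Specializing $y$ to the standard basis vectors of $\Kt^n$ shows this is equivalent to $A^t x = 0 \Rightarrow x = 0$, i.e., to the injectivity of $A^t$. Applying the equivalence (ii) $\Leftrightarrow$ (iv) already proved (to the matrix $A^t$) and using $\det(A^t) = \det(A)$, this is equivalent to $\det(A)$ being invertible.

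I expect the main obstacle to lie in the implication (ii) $\Rightarrow$ (iv): one must convert the failure of invertibility of $\det(A)$, which is only a statement about a single element of $\Kt$, into an actual nonzero kernel element in $\Kt^n$. The subtle point is producing a representative net which is simultaneously moderate \emph{and} non-negligible as an element of $\Kt^n$; this is exactly where one must combine the characterization of non-invertibility from Lemma \ref{zerodiv} with a pigeonhole step on the coordinates to ensure that at least one coordinate stays bounded away from zero along the chosen subsequence.
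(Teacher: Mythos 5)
The paper does not actually give a proof of this lemma; it is stated with a citation to \cite[Lemma 1.2.41]{book}, and the remark immediately following it sketches the intended route: for any commutative ring with unit, (iii) and (iv) are equivalent and (ii) is equivalent to $\det(A)$ not being a zero divisor (this is McCoy's theorem), and Lemma \ref{zerodiv} then closes the loop in the $\Kt$-specific situation because non-zero-divisors and invertibles coincide. Your cycle of implications and the transpose reduction of (i) to the injectivity of $A^t$ are a perfectly reasonable alternative, and steps (iv) $\Rightarrow$ (iii), (iii) $\Rightarrow$ (ii), and (i) $\Leftrightarrow$ (iv) are correct as written.

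There is, however, a genuine gap in your (ii) $\Rightarrow$ (iv) step. Lemma \ref{zerodiv}, applied to $\det(A)$, gives a representative $(r_\eps)_\eps$ of the \emph{scalar} $\det(A)$ together with a zero sequence $\eps_k$ on which $r_{\eps_k}=0$. It does \emph{not} give a representative $(A_\eps)_\eps$ of the matrix $A$ with $\det(A_{\eps_k})=0$, and you use the latter to pick exact kernel vectors of $A_{\eps_k}$. For an arbitrary representative $(B_\eps)_\eps$ of $A$ one only knows that $\det(B_{\eps_k})$ is \emph{negligibly small} along $\eps_k$, not zero, so the classical kernel of $B_{\eps_k}$ may be trivial. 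You need one further argument, e.g.: (a) perturb $B_{\eps_k}$ to an exactly singular matrix at negligible cost, using that the distance to the set of singular matrices equals the smallest singular value $\sigma_n(B_{\eps_k})$, which is bounded by $|\det(B_{\eps_k})|^{1/n}$; or (b) drop the attempt to get exact kernel vectors and instead take $v_{\eps_k}$ to be a unit vector with $\|B_{\eps_k}v_{\eps_k}\|=\sigma_n(B_{\eps_k})$; this is negligible along $\eps_k$, so the resulting generalized vector $v$ still satisfies $Av=0$, and your pigeonhole step then applies unchanged to show $v\neq 0$. You correctly flag (ii) $\Rightarrow$ (iv) as the delicate implication, but you locate the subtlety in the pigeonhole step, which is routine once kernel vectors are in hand; the real issue is the preceding step of passing from a vanishing representative of $\det(A)$ to a (nearly) singular representative of $A$.
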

 
Note that, for a square matrix over an arbitrary commutative ring with unit, (iii) and (iv) are equivalent, while
(ii) is equivalent to the determinant not being a zero divisor, which, by Lemma \ref{zerodiv}, amounts to the same.
There is, however, no well-defined notion of rank for matrices over $\Kt$:
Indeed, the $2\times 2$-matrix 
$$A=\begin{pmatrix} e_S & e_S\\ e_{S^c} & e_{S^c} \end{pmatrix}$$
has row rank $0$, but column rank $1$.

From \cite[Theorem 5.8]{EbM-LGiAGF} we obtain:
 \begin{lem} \label{complexEberhard} Let $v \in \Kt^n$ and $\langle\ , \
\rangle$ denote the standard euclidean or unitary inner product, then the
following are equivalent:
\begin{enumerate}
   \item $v$ is free.
   \item $\langle v, v \rangle$ is strictly positive.
   \item The coefficients of $v$ (with respect to any basis) span $\Kt$.
   \item $v$ can be extended to a basis of $\Kt^n$.
\end{enumerate}
\end{lem}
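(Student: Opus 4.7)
The plan is to close the cycle $(iv)\Rightarrow(i)\Leftrightarrow(ii)\Leftrightarrow(iii)\Rightarrow(iv)$. The implication $(iv)\Rightarrow(i)$ is immediate from uniqueness of basis coordinates. For the chain $(i)\Leftrightarrow(ii)\Leftrightarrow(iii)$, the key observation is that $\langle v,v\rangle = \sum_i |v_i|^2$ has non-negative representatives, so strict positivity coincides with invertibility. $(ii)\Rightarrow(iii)$ follows by choosing $\mu_i := \bar v_i/\langle v,v\rangle$, which gives $\sum_i \mu_i v_i = 1$. For $(iii)\Rightarrow(ii)$: if $\sum_i \mu_i v_i = 1$ but $\langle v,v\rangle$ were non-invertible, Lemma \ref{zerodiv} would furnish a zero sequence $\eps_k$ with all $v_{i,\eps_k}=0$, contradicting the requirement that $\sum_i \mu_{i,\eps_k} v_{i,\eps_k} - 1$ be negligible along $\eps_k$. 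For $(ii)\Rightarrow(i)$, if $\lambda v = 0$ then each $\lambda \bar v_i v_i = 0$; summing and invoking invertibility of $\langle v,v\rangle$ gives $\lambda=0$. For $(i)\Rightarrow(ii)$: if $\langle v,v\rangle$ is not invertible, Lemma \ref{zerodiv} produces a zero sequence $\eps_k$ with $v_{i,\eps_k}=0$ for all $i$; with $S := \{\eps_k : k \in \N\}$, the non-zero idempotent $e_S$ satisfies $e_S v = 0$, contradicting freeness.

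The substantive step is $(iii)\Rightarrow(iv)$. Fix a representative $(v_\eps)_\eps$ and choose $m \in \N$ with $\sum_j |v_{j,\eps}|^2 > \eps^{2m}$ for $\eps$ small. Partition $I$ into disjoint sets $S_1, \ldots, S_n$ by placing $\eps \in S_k$ precisely when $|v_{k,\eps}|$ is maximal among $|v_{1,\eps}|, \ldots, |v_{n,\eps}|$, breaking ties by smallest index. On $S_k$,
\[
|v_{k,\eps}|^2 \;\geq\; \frac{1}{n}\sum_j |v_{j,\eps}|^2 \;>\; \frac{\eps^{2m}}{n}.
\]
Let $f_1, \ldots, f_n$ denote the standard basis of $\Kt^n$ and $\pi_k \col \{2, \ldots, n\} \to \{1, \ldots, n\}\setminus\{k\}$ the order-preserving bijection. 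Define
\[
w_j \;:=\; \sum_{k=1}^n e_{S_k}\, f_{\pi_k(j)} \;\in\; \Kt^n \qquad (j=2,\ldots,n),
\]
and assemble $B := (v \mid w_2 \mid \cdots \mid w_n) \in M(n,\Kt)$. Since the $S_k$ partition $I$, the representative $B_\eps$ for $\eps \in S_k$ has columns $v_\eps, f_1, \ldots, \widehat{f_k}, \ldots, f_n$; expansion along row $k$ gives $|\det B_\eps| = |v_{k,\eps}| > \eps^m/\sqrt n$, so $\det B$ is strictly non-zero and hence invertible. Lemma \ref{GKOS_Lemma} then yields that $B$ is bijective, and the columns of $B$ thus form a basis of $\Kt^n$ whose first element is $v$.

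The genuine obstacle is $(iii)\Rightarrow(iv)$, since over an arbitrary commutative ring unimodularity does not suffice for extendability to a basis. What rescues $\Kt$ is the abundance of idempotents $e_S$: no single coordinate $v_k$ need be invertible, but the dominant coordinate on each piece of a partition of $I$ provides a local pivot whose strict positivity is inherited from that of $\langle v,v\rangle$, and the idempotents glue these local choices into one global matrix with invertible determinant. Moderateness of $B$ is automatic, since its entries are either entries of $v$ or the constants $0$ and $1$.
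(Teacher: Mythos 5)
The paper itself gives no proof of this lemma; it is cited from \cite[Theorem 5.8]{EbM-LGiAGF}, so there is nothing to match your argument against directly, and it must stand on its own. It essentially does. The idempotent construction in $(iii)\Rightarrow(iv)$ -- partitioning $I$ by the index of the largest coordinate, gluing local pivots with the $e_{S_k}$, and reading off an invertible determinant -- is exactly the kind of technique used elsewhere in the paper and in the Colombeau literature, and it is cleaner than the inductive Gram--Schmidt route the authors use for the more general basis-extension Lemma~\ref{extlemma} (which in any case invokes the present lemma, so could not be cited without circularity). The determinant computation, moderateness, and the appeal to Lemma~\ref{GKOS_Lemma} are all correct.

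The one place you should be more careful is in both uses of Lemma~\ref{zerodiv}. That lemma produces \emph{some} representative $(r_\eps)$ of $\langle v,v\rangle$ vanishing along a zero sequence $(\eps_k)$; it does not assert that the canonical representative $\sum_i |v_{i,\eps}|^2$ built from your chosen representative of $v$ vanishes there, and hence it does not literally give $v_{i,\eps_k}=0$. What you do get is that $\sum_i|v_{i,\eps_k}|^2$ equals a negligible correction along $(\eps_k)$, so each $|v_{i,\eps_k}|$ is rapidly decaying. That is enough: in $(iii)\Rightarrow(ii)$, moderateness of the $\mu_i$ then forces $\sum_i\mu_{i,\eps_k}v_{i,\eps_k}\to 0$ along $(\eps_k)$, contradicting $\sum_i\mu_i v_i=1$; and in $(i)\Rightarrow(ii)$, with $S=\{\eps_k\}$, the net $\chi_S v_\eps$ is still negligible, so $e_S v=0$ with $e_S\neq 0$. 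Alternatively one may first replace the representative of $v$ by one vanishing identically on $\{\eps_k\}$ and then argue exactly as you wrote. Either fix is routine, but as stated the step overclaims what Lemma~\ref{zerodiv} delivers.
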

 
As is well-known, even in free modules of finite rank the extension of a given set
of free vectors to a basis might fail. (E.g., $2$ is free in the $\Z$-module $\Z$,
but cannot be member of any basis.) However, the above lemma implies that we may
always extend a single free vector in a free $\Kt$-module of finite rank to a
basis. In general, the problem of  basis extension in free modules over arbitrary
rings is related to the question of free quotient modules (cf.\ 
\cite[Chapter III, Section 3.3, Satz 10 (Erg\"anzungssatz)]{OR:74}). Fortunately,
the specific features of $\Rt$ and $\Ct$ allow us to prove the possibility of
general basis extension in any free $\Kt$-module of finite rank. 

\begin{lem}\label{extlemma}
Let $V$ be a free module over $\Kt$ of finite rank $n > 0$. Suppose that $k\in\N$,
$0 < k < n$ and that the set $\{v_1,\ldots,v_k\}$ is free in $V$. Then there
are $n-k$ vectors $v_{k+1},\ldots,v_n$ in $V$ such that $\{v_1,\ldots,v_n\}$ is a
basis of $V$. 
\end{lem}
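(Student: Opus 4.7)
I will argue by induction on $n$. For $k=1$, Lemma \ref{complexEberhard} (transferred from $\Kt^n$ to $V$ via any chosen basis) yields the basis extension directly, at any rank. Hence I may assume $k\geq 2$, which in particular forces $n\geq 3$, and I suppose the lemma holds for every free $\Kt$-module of rank strictly less than $n$.

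The idea is to reduce the problem to a quotient of lower rank. I first apply Lemma \ref{complexEberhard} to extend the single free vector $v_1$ to a basis $\{v_1,e_2,\ldots,e_n\}$ of $V$. Then $W:=V/\Kt v_1$ is a free $\Kt$-module of rank $n-1$, with basis given by the images $[e_2],\ldots,[e_n]$. Next I check that $[v_2],\ldots,[v_k]$ form a free set in $W$: any relation $\sum_{i=2}^{k}\lambda_i[v_i]=0$ lifts to $\sum_{i=2}^{k}\lambda_i v_i=\mu v_1$ in $V$, and the freeness of $\{v_1,\ldots,v_k\}$ forces $\mu=\lambda_2=\cdots=\lambda_k=0$. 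Since $0<k-1<n-1$, the induction hypothesis furnishes $v_{k+1},\ldots,v_n\in V$ whose images complete $\{[v_2],\ldots,[v_k]\}$ to a basis of $W$.

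Lifting back, a routine check confirms that $\{v_1,\ldots,v_n\}$ is a basis of $V$: spanning follows by expanding the image $[w]$ of an arbitrary $w\in V$ in the basis of $W$ and correcting the chosen lift by a scalar multiple of $v_1$; linear independence of $v_2,\ldots,v_n$ modulo $\Kt v_1$ comes from the quotient basis, and the freeness of $v_1$ disposes of the remaining coefficient in any relation $\sum_{i=1}^{n}\lambda_i v_i=0$. The decisive ingredient is Lemma \ref{complexEberhard}: it is what enables the initial extension of $v_1$ and thereby ensures that $V/\Kt v_1$ is itself free of rank $n-1$, making the inductive descent possible. Over a general commutative ring no such quotient-based argument would be available, which is consistent with the failure of basis extension in the $\Z$-module $\Z$ mentioned just before the statement.
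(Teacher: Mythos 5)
Your proof is correct, and it takes a genuinely different route from the one in the paper. Both arguments ultimately bootstrap from the single-vector case supplied by Lemma~\ref{complexEberhard}, but they use it in opposite ways. The paper works at the level of representatives: it picks nets $(v_j^\eps)_\eps$ for the given vectors, chooses for each $\eps$ a unit vector $w_\eps\in\K^n$ orthogonal (in the standard inner product) to $v_1^\eps,\ldots,v_k^\eps$, and shows that the resulting class $w=[(w_\eps)]$ is a free vector with $\{v_1,\ldots,v_k,w\}$ free; this is iterated $n-k$ times, and the fact that $n$ free vectors in $\Kt^n$ span is finally supplied by property~$(\star)$ of Remark~\ref{submodrem}. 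Your argument is coordinate-free: you use Lemma~\ref{complexEberhard} once to split off the rank-one direct summand $\Kt v_1$, pass to the free quotient $V/\Kt v_1$ of rank $n-1$, verify freeness of the images $[v_2],\ldots,[v_k]$ there, and descend by induction on the rank; the basis property of the lifted set is then checked directly from the quotient, so Remark~\ref{submodrem}~$(\star)$ is not needed. The paper's approach is more constructive (it produces extension vectors explicitly orthogonal to the given ones), while yours is shorter, avoids $\eps$-level estimates and moderateness checks, and isolates the one genuinely $\Kt$-specific input, namely that a single free vector always splits off a free direct summand.
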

\begin{proof} Since $V$ is isomorphic to $\Kt^n$, we may assume that $V = \Kt^n$.
Let $U := \Kt\text{-}\Span\{v_1,\ldots,v_k\}$ and let $(v_j^\eps)_{\eps \in I}$
 be an arbitrary representative of $v_j$ ($j=1,\ldots,k$). 
 
 We show that we can extend the set $\{v_1,\ldots,v_k\}$ by a vector 
$w \in U^\perp$ such that $\{v_1,\ldots,v_k,w\}$ is free in $\Kt^n$. 
 For every $\eps \in I$
we can find a vector $w_\eps \in \mathbb{K}^n$ satisfying $\langle w_\eps,
w_\eps \rangle = 1$ and $\langle w_\eps, v_j^\eps\rangle = 0$ ($j=1,\ldots,k$).
Since $\| w_\eps\| = 1$, the coefficients of $(w_\eps)_{\eps\in I}$ are
moderate nets in $\mathbb{K}$ and hence, by construction and Lemma
\ref{complexEberhard}, $w := [(w_\eps)]$ is a free vector in $U^\perp$. Hence
$\{v_1,\ldots,v_k,w\}$ is free. Proceeding by induction we obtain a basis after
$n-k$ steps (cf.\ Remark \ref{submodrem} below). 
\end{proof}

\begin{rem} Note that unlike in the vector space case $\mathbb{K}^n$ one
cannot expect to find a basis extension by an appropriate subset of vectors from
the standard basis $\{e_1,\ldots,e_n\}$. For example, consider the 
vector $v_1 = (c,1-c)$ in $\Rt^2$, where $c = [(c_\eps)]$ with $c_\eps = 1$, if
$1/\eps \in \N$, and
$c_\eps = 0$ otherwise. Then $v_1$ is a free vector, but neither $\{v_1,e_1\}$
nor $\{v_1,e_2\}$ constitutes a basis of $\Rt^2$, since the corresponding
determinants are zero divisors. However, $w = (1-c,c)$ is free
vector perpendicular to $v_1$ and $\{v_1,w\}$ is a basis of $\Rt^2$.
\end{rem}

Finally, we note the following basic characterization of non-degeneracy for 
bilinear forms on a free $\Kt$-module of finite rank.

 \blem \label{lem:nondeg form}
 Let $\omega$ be a bilinear form on a free $\Kt$-module $V$ of finite rank. 
 Then the following are equivalent:
 \begin{itemize}
 \item[(i)] $\omega$ is non-degenerate, i.e.,
 \beq\label{nondeg}
   \forall w \in V: \quad \omega(v,w) = 0 \;\; \forall v \in V 
    \Rightarrow w = 0.
\eeq
 \item[(ii)] The transpose $\omega^t$ of $\omega$, defined by
 $\omega^t(v,w)=\omega(w,v)$, is non-degenerate.
 \item[(iii)] The matrix of $\omega$ with respect to any basis of $V$ is invertible.
 \item[(iv)] The linear map $\omega^\flat:V\to V^*$,
 $\omega^\flat(v)\cdot w= \omega(v,w)$, is an isomorphism.
 \end{itemize}
 \elem
 \pr This follows from \cite{SS:V2}, Satz 70.3 and Satz 70.5, combined with Lemma \ref{zerodiv}.
 \ep

 \section{Linear algebra and symplectic forms on $\Rt$-modules} \label{sec:lin alg}

Our basic references on the general theory of modules over rings are
\cite{SS:V1,SS:V2} and \cite{OR:74}. As a guideline for elements of
symplectic linear algebra and geometry which are fundamental to applications in
microlocal analysis we use \cite[Sections 21.1 and 21.2]{Hoermander:V3}.

 \subsection{Basic structure of symplectic $\Rt$-modules} \label{basicsec}

\begin{defin} Let $V$ be an $\Rt$-module. An $\Rt$-bilinear form $\sig : V \times V \to \Rt$ is a \emph{symplectic form} on $V$ if $\sig$ is skew-symmetric, i.e., $\sig(v,w) = - \sig(w,v)$ for all $v, w \in V$, and non-degenerate. 
The pair $(V,\sig)$ is then called a \emph{symplectic $\Rt$-module.}
 \end{defin}
 
\begin{ex}\label{standardex} Our standard model space for a symplectic $\Rt$-module is $T^*({\Rt}^n) := \Rt^n \times \Rt^n$ with symplectic form $\tilde{\sig}$ given by 
\beq\label{standsymp}
  \tilde{\sig}((x,\xi),(y,\eta)) = \sum_{j=1}^n y_j \xi_j - 
  \sum_{j=1}^n x_j \eta_j
  = \dual{y}{\xi} - \dual{x}{\eta}
  \qquad \forall (x,\xi), (y,\eta) \in T^*(\Rt^n).
\eeq
Note that $T^*(\Rt^n)$ is a free module of rank $2n$ and possesses the basis $\{e_1,\ldots,e_n,f_1,\ldots,f_n\}$, where $e_j := (\de_j,0)$ and $f_j := (0,\de_j)$ with $\de_j$ the $j^\text{th}$ standard unit vector ($1 \leq j \leq n$), satisfying
$$
  \tilde{\sig}(e_j,e_l) = 0 = \tilde{\sig}(f_j,f_l), \quad \tilde{\sig}(f_j,e_l) = \delta_{jl}
  \quad (1 \leq j,l \leq n).
$$
\end{ex}

\begin{thm}\label{basisthm} Let $(V,\sig)$ be a symplectic $\Rt$-module, where $V$ is free and of finite rank $m \in \N$. Then $m$ is even, say $m = 2 n$, and $V$ possesses a \emph{symplectic basis}, i.e., a basis $\{e_1, \ldots,e_n,f_1,\ldots,f_n\}$ such that
\beq\label{sympbasis}
   \sig(e_j,e_l) = 0 = \sig(f_j,f_l), \quad \sig(f_j,e_l) = \delta_{jl}
  \quad (1 \leq j,l \leq n).
\eeq 
\end{thm}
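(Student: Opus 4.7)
The plan is to proceed by induction on the rank $m$ of $V$, mirroring the classical construction of symplectic bases by peeling off hyperbolic planes. The base case $m=0$ is vacuous. For the inductive step, I aim to produce a pair $\{e_1,f_1\}$ with $\sig(f_1,e_1)=1$, to show that $V$ splits as an $\Rt$-module into $\Span\{e_1,f_1\} \oplus W$, where $W$ is the $\sig$-orthogonal of $\{e_1,f_1\}$, and to verify that $W$ is a free symplectic $\Rt$-module of rank $m-2$, so that the induction hypothesis applied to $W$ provides the remaining pairs.

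For the first step, I pick any basis vector and call it $e_1$; it is free by definition. By Lemma \ref{extlemma} it extends to a basis $\{e_1,v_2,\ldots,v_m\}$ of $V$ with dual basis $\{\al_1,\ldots,\al_m\}$ of $V^*$. Since $\sig$ is non-degenerate, Lemma \ref{lem:nondeg form}(iv) yields an $\Rt$-module isomorphism $\sig^\flat \col V \to V^*$, $\sig^\flat(v)(w) = \sig(v,w)$, so I may set $f_1 := (\sig^\flat)^{-1}(\al_1)$. Then $\sig(f_1,e_1)=\al_1(e_1)=1$, and $f_1$ is automatically free because it is a member of the basis of $V$ obtained by pushing $\{\al_i\}$ back through $(\sig^\flat)^{-1}$. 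Using $\sig(f_1,e_1)=1$ together with skew-symmetry and the identity $\sig(v,v)=0$ (which holds because $2$ is invertible in $\Rt$), a two-line computation shows that $\{e_1,f_1\}$ is a free subset of $V$.

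Next I define $W := \{w\in V : \sig(w,e_1)=\sig(w,f_1)=0\}$ and verify that $V = \Span\{e_1,f_1\} \oplus W$: given $v\in V$, the coefficients $c:=-\sig(v,f_1)$ and $d:=\sig(v,e_1)$ produce the unique decomposition $v = c\,e_1 + d\,f_1 + (v - c\,e_1 - d\,f_1)$ with the last summand in $W$, as a direct evaluation of $\sig(\cdot,e_1)$ and $\sig(\cdot,f_1)$ shows. To exhibit that $W$ is free of rank $m-2$, I extend $\{e_1,f_1\}$ to a basis $\{e_1,f_1,u_3,\ldots,u_m\}$ of $V$ (again by Lemma \ref{extlemma}) and set
$$
  u_j' := u_j + \sig(u_j,f_1)\, e_1 - \sig(u_j,e_1)\, f_1 \qquad (3\le j\le m).
$$
Each $u_j'$ lies in $W$, and the change-of-basis matrix from $\{e_1,f_1,u_3,\ldots,u_m\}$ to $\{e_1,f_1,u_3',\ldots,u_m'\}$ is upper triangular with ones on the diagonal, hence invertible. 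Therefore $\{e_1,f_1,u_3',\ldots,u_m'\}$ is a basis of $V$, and the direct sum decomposition forces $\{u_3',\ldots,u_m'\}$ to be a basis of $W$. Non-degeneracy of $\sig|_W$ then follows from non-degeneracy of $\sig$ on $V$ together with this splitting.

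Applying the induction hypothesis to $W$ yields a symplectic basis $\{e_2,\ldots,e_n,f_2,\ldots,f_n\}$ of $W$ with $m-2=2(n-1)$, which combines with $\{e_1,f_1\}$ into a symplectic basis of $V$ and forces $m=2n$. The case of odd $m$ is excluded along the way: at some step the induction would have to handle a symplectic module of rank $1$, where $\sig$ vanishes identically by skew-symmetry and cannot be non-degenerate. The main obstacle I anticipate is the freeness of $W$, since in a general commutative ring the orthogonal complement of a free direct summand need not be free; this is precisely where the explicit unipotent correction $u_j \mapsto u_j'$, combined with the availability of the basis-extension Lemma \ref{extlemma} for free $\Rt$-modules, makes the argument work.
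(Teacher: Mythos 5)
Your proposal is correct and follows essentially the same route as the paper: you peel off a hyperbolic pair $\{e_1,f_1\}$ using the invertibility of the Gramian matrix (phrased via $\sig^\flat$ from Lemma \ref{lem:nondeg form}, which the paper uses equivalently by solving $G[e_1]=(1,0,\ldots,0)^t$), split $V=\Span\{e_1,f_1\}\oplus S^\sig$ with the identical projection formula, obtain a basis of $S^\sig$ of rank $m-2$ via Lemma \ref{extlemma} and the unipotent correction $u_j\mapsto u_j'$, and then induct. The minor cosmetic differences (choosing $e_1$ first and solving for $f_1$, and establishing freeness of $\{u_3',\ldots,u_m'\}$ via a triangular change-of-basis matrix rather than the paper's surjectivity-of-$\pi$ argument) do not change the substance.
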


\begin{proof} Let $\{b_1,\ldots,b_m\}$ be a basis of $V$. Note that we necessarily have $m > 1$, for $\sig$ is degenerate on any $\Rt$-span of a single vector by skew-symmetry.

\emph{Step 1:} We construct a $2$-dimensional submodule $S \subseteq V$ with a symplectic basis $\{e_1,f_1\}$.

Non-degeneracy of $\sig$ implies that the skew-symmetric Gramian matrix $G$ of $\sig$ with components $\sig(b_i,b_j)$ ($1 \leq i,j \leq m$) is invertible by Lemma \ref{lem:nondeg form}.
Set $f_1:=b_1$. In order to construct $e_1\in V$ with $\sigma(f_1,e_1)=1$ we denote by $[v]$ the coordinate representation
of $v\in V$ with respect to  $\{b_1,\ldots,b_m\}$. Then our task is to find $e_1$ with $[f_1]^t G [e_1] = 1$. To achieve this
it is sufficient to set $[e_1] := G^{-1}(1,0,\dots,0)^t$.
By applying $\sigma(f_1,\,.\,)$ or $\sigma(\,.\,,e_1)$ to $\lambda e_1 + \mu f_1 = 0$ it follows that $\lambda=\mu=0$. Thus
$\{e_1,f_1\}$ is free and so it forms a symplectic basis of its span $S$.

\emph{Step 2:} We show that $V = S \oplus S^\sig$, where $S^\sig := \{ w \in V \mid \sig(w,s) = 0 \; \forall s \in S \}$.

To begin with, we have $S \cap S^\sig = \{ 0 \}$, since $w \in S \cap S^\sig$ means $w = \la f_1 + \mu e_1$ with certain $\la, \mu \in \Rt$, while $\sig(w,f_1) = 0$ and $\sig(w,e_1) = 0$ imply $\la = \mu = 0$.

It remains to show that any $w \in V$ can be written in the form $w = w_1 + w_2$ with $w_1 \in S$ and $w_2 \in S^\sig$. We put $w_1 := \sig(w,e_1) f_1 - \sig(w,f_1) e_1 \in S$ and claim that $w_2 := w - w_1 \in S^\sig$ (in other words, $w \mapsto w_1$ is the projection onto $S$ along $S^\sig$). Let $v \in S$ have the basis  representation $v = \la f_1 + \mu e_1$, then a simple calculation yields
$$
  \sig(w_2,v) = \sig(w,\la f_1 + \mu e_1) - 
    \sig(\sig(w,e_1) f_1 - \sig(w,f_1) e_1,\la f_1 + \mu e_1) = 0.
$$

\emph{Step 3:} Let $\sig_1 := \sig \mid_{S^\sig \times S^\sig}$. We claim that $(S^\sig,\sig_1)$ is a free symplectic $\Rt$-module of rank $m-2$.

First, it is easily seen that $\sig_1$ is non-degenerate: If $v \in S^\sig$ and $\sig(v,w_2) = 0$ for all $w_2 \in S^\sig$, then also $\sig(v,w) = 0$ for all $w\in V$, since $V = S \oplus S^\sig$. Hence $v = 0$ by non-degeneracy of $\sig$. 

Second, we show that $S^\sig$ is a free submodule of rank $m-2$. For any $w \in V$ we have shown in Step 2 that $\pi(w) := w - \sig(w,e_1) f_1 + \sig(w,f_1) e_1$ belongs to $S^\sig$, thus  $\pi : V \to S^\sig$ defines an $\Rt$-linear map with $\pi(e_1) = \pi(f_1) = 0$. 
By Lemma \ref{extlemma} we have that the free set $\{e_1,f_1\}$ can be extended to a basis 
$\{e_1,f_1,b'_3,\ldots,b'_m\}$ of $V$.   
Now we put $c_l := \pi(b'_l)$ ($l=3,\ldots,m$) and show that $\{c_3,\ldots,c_m\}$ is a free set of vectors generating $S^\sig$: 

If $\sum_{l=3}^m \la_l c_l = 0$, then direct calculation gives
$$
  0 = \sum_{l=3}^m \la_l b'_l - \mu f_1 + \nu e_1,
$$  
where $\mu = \sum_{l=3}^m \la_l \sig(b'_l,e_1)$ and $\nu = \sum_{l=3}^m \la_l \sig(b'_l,f_1)$. Linear independence of $\{e_1,f_1,b'_3,\ldots,b'_m\}$ implies $\la_3 = \cdots = \la_m = 0$, thus the set $\{ c_3,\ldots,c_m\}$ is free.

To show that $\Rt\text{-span }\{ c_3,\ldots,c_m\} = S^\sig$ it suffices to note that $\pi$ is surjective. Indeed, $w\in S^\sig$ means $\sig(w,e_1) = \sig(w,f_1) = 0$ and therefore $\pi(w) = w$.

\emph{Step 4:} We complete the proof by induction on the rank $m$ of $V$.

We have seen that $m=1$ contradicts the non-degeneracy of $\sig$ and in case $m=2$ we have a symplectic basis by Step 1. The constructions in Steps 2 and 3 allow the case of rank $m$ to be reduced to that of rank $m-2$, where we have a symplectic basis by induction hypothesis, and the observation that the union of symplectic bases in $S$ and $S^\sig$ provides a symplectic basis for $V$. In particular, $m-2$ has to be an even number, hence so is $m$.
\end{proof}

\begin{rem}\label{nonfreeremark}\begin{trivlist}

\item{(i)}  In contrast to the above result, if  $(V,\sig)$ is a non-free,
finitely generated  $\Rt$-module, then we can never have a generating set of
vectors satisfying the relations \eqref{sympbasis}.  
In fact, \eqref{sympbasis} implies that $\{e_1,\dots,e_n,f_1,\dots,f_n\}$
is free (apply $\sigma(\,.\,,e_j)$ and $\sigma(\,.\,,f_j)$ to any linear combination).

\item{(ii)} A simple nontrivial example of a symplectic form on a non-free, finitely generated symplectic $\Rt$-module is provided by the following: Choose a zero divisor $\al \neq 0$  in $\Rt$ and define  
$$
  V := \Rt\text{-span }\{ (\al,0),(0,\al) \} \subseteq \Rt^2, 
  \quad \text{and } \sig := \tilde{\sig} \mid_{V \times V}. 
$$  
Clearly, $\sig$ is bilinear and skew-symmetric. Moreover, $\sig$ is also non-degenerate on $V$ by the following simple argument. Any vectors $v,w \in V$ are of the form $v = \al (x,\xi)$, $w = \al (y,\eta)$ with $(x,\xi), (y,\eta) \in \Rt^2$ and we obtain 
$$
  \sig(v,w) = \al (y \xi - x \eta).
$$
Thus, $\sig(v,w) = 0$ for all $w$ implies $\al x = 0$ and $\al \xi = 0$, hence $v = 0$. 

\item{(iii)} The fact that the underlying symplectic $\Rt$-modules are assumed to be free in most of the constructions to follow will not be a severe restriction for our first applications to non-smooth symplectic geometry. In fact, the typical module will then simply be the $\Rt$-extension of a classical tangent space $T_p M$ to a ($2n$-dimensional) manifold $M$ at the point $p \in M$ and will thus be isomorphic to $\Rt^{2n}$, hence free.

\end{trivlist}
\end{rem}

Besides constructing symplectic bases from scratch it is often important to extend
a given ``partial symplectic basis'', e.g., as discussed in \cite[Proposition
21.1.3]{Hoermander:V3} for the case of symplectic vector spaces. Thanks to Lemma
\ref{extlemma} we can also prove a symplectic basis extension result. 

\begin{prop}\label{basisextprop} Let $(V, \sig)$ be a free symplectic $\Rt$-module of finite rank $2n$. Let $I, J \subseteq \{ 1,\ldots n\}$ and $e_i \in V$ ($i\in I$) and $f_j \in V$ ($j \in J$) be elements such that the set $B:= \{e_i \mid i \in I \} \cup \{ f_j \mid j \in J\}$ is free in $V$ and satisfies
$$
  \sig(e_i,e_k) = 0 = \sig(f_j,f_l), \quad \sig(f_j,e_i) = \delta_{ji}
  \quad (i,k \in I; j,l \in J).
$$
Then we can find elements $e_i \in V$ ($i \in \{1, \ldots n\} \setminus I$) and
$f_j \in V$ ($j \in \{1, \ldots n\} \setminus J$) such that $\{e_1, \ldots, e_n,
f_1, \ldots, f_n\}$ is a symplectic basis of $(V,\sig)$.
\end{prop}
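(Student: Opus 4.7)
The plan is to first reduce to the matched case $I = J$ by successively adjoining partner vectors, and then to complete to a full symplectic basis by applying Theorem \ref{basisthm} to the symplectic complement of the span of the paired set. The whole strategy leans on Lemma \ref{extlemma}, which allows one to treat free subsets of $\Rt$-modules essentially as in the vector-space setting.

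For the pairing step, assume for definiteness that $i_0 \in I \setminus J$. I would seek $f_{i_0} \in V$ with $\sig(f_{i_0}, e_i) = \delta_{i_0 i}$ for $i \in I$ and $\sig(f_{i_0}, f_j) = 0$ for $j \in J$. This amounts to hitting a prescribed element under the $\Rt$-linear map
\[
\Phi \col V \to \Rt^{|I|+|J|}, \quad v \mapsto \bigl((\sig(v,e_i))_{i\in I},\,(\sig(v,f_j))_{j\in J}\bigr).
\]
Surjectivity of $\Phi$ follows by factoring it as $\sig^\flat \col V \to V^*$, an isomorphism by Lemma \ref{lem:nondeg form}, composed with the evaluation map $V^* \to \Rt^{|I|+|J|}$ on the elements of $B$; the latter is surjective because $B$ extends to a basis of $V$ by Lemma \ref{extlemma}, so the corresponding dual basis furnishes preimages of the standard basis vectors. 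Once $f_{i_0}$ has been produced, the enlarged set $B \cup \{f_{i_0}\}$ inherits the required symplectic relations with $J$ replaced by $J \cup \{i_0\}$, and it remains free: testing a vanishing linear combination against $e_{i_0}$ via $\sig(\,\cdot\,,e_{i_0})$ isolates the coefficient of $f_{i_0}$ and forces it to vanish, after which freeness of $B$ kills the remaining coefficients. The symmetric case $j_0 \in J \setminus I$ is handled analogously. After finitely many iterations I obtain a symplectic subset $\{e_k, f_k \mid k \in K\}$ indexed by $K := I \cup J$.

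To complete this to a full symplectic basis, I would set $S := \Rt\text{-}\Span\{e_k, f_k \mid k \in K\}$ and mimic Steps 2 and 3 of the proof of Theorem \ref{basisthm}. The map
\[
\pi(w) := w - \sum_{k \in K} \bigl(\sig(w,e_k)\,f_k - \sig(w,f_k)\,e_k\bigr)
\]
is directly checked to be the projection of $V$ onto $S^\sig$ along $S$, so $V = S \oplus S^\sig$; applying $\pi$ to any basis extension of $\{e_k, f_k \mid k \in K\}$ (again via Lemma \ref{extlemma}) yields free generators of $S^\sig$, making $(S^\sig, \sig|_{S^\sig \times S^\sig})$ a free symplectic $\Rt$-module of rank $2(n-|K|)$. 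Theorem \ref{basisthm} then delivers a symplectic basis of $S^\sig$ whose $n-|K|$ pairs can be labelled by the indices in $\{1,\ldots,n\}\setminus K$, and the union with $\{e_k, f_k \mid k \in K\}$ is the desired symplectic basis of $V$. The main obstacle is precisely the surjectivity of $\Phi$: over an arbitrary commutative ring, free subsets need not extend to bases, so without Lemma \ref{extlemma} the pairing step would break down, and the whole argument hinges on having that lemma available for $\Rt$.
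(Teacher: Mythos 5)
Your proposal is correct and follows essentially the same route as the paper: reduce to the matched case $I=J$ by successively adjoining the missing partner vectors, then apply the analogue of Steps 2--3 of Theorem \ref{basisthm} to the symplectic complement and invoke Theorem \ref{basisthm} there. The only cosmetic difference is in the pairing step: the paper extends $B$ to a full basis via Lemma \ref{extlemma} and solves the resulting $2n\times 2n$ linear system using invertibility of the Gramian, whereas you argue via surjectivity of $\Phi = \mathrm{eval}_B \circ \sig^\flat$ (using the dual basis of the same extension), which is the same computation viewed more structurally.
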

\begin{proof}  \emph{Case 1, $I = J$:} We assume $I \not= \{ 1,\ldots,n\}$, since
otherwise there is nothing left to be done. The equations in the hypothesis show
that $B$ is  a symplectic basis of a submodule $U$ (consisting of $2 |I|$ elements), i.e., the
restriction of $\sig$ to $U \times U$ defines a symplectic form on $U$. Exactly as
in step 3 of the proof of Theorem \ref{basisthm}, but employing Lemma
\ref{extlemma} to first obtain some basis of $V$ at all, one proves that
$U^\sig := \{ v \in V \mid \sig(v,w) = 0\; \forall w \in U\}$ is a symplectic
$\Rt$-module and free of rank $2 (n - |I|)$. Thus, by the same theorem, it
possesses a symplectic basis itself. Combining $B$ with the latter basis yields a
symplectic basis for $V$.

\emph{Case 2, $J\setminus I \not= \emptyset$:} Let $j_0 \in J\setminus I$. We will show that we can extend $B$ by a free element $e_{j_0} = e \in V$ satisfying 
$$
  \sig(e,e_i) = 0 \; (i \in I), \sig(e,f_j) = - \delta_{j_0 j} \; (j\in J).
$$
We may employ Lemma \ref{extlemma} and extend $B$ to a basis of $V$ by adding
appropriate elements $a_k \in V$ ($k\in \{1,\ldots,n\} \setminus I$) and $b_l \in
V$ ($l\in \{1,\ldots,n\} \setminus J$). The Gramian matrix of $\sig$ with respect
to this basis is invertible, hence the system of linear equations
\begin{align*}
  \sig(e,e_i) &= 0 \;(i\in I), &\sig(e,f_j) &= - \delta_{j_0,j} \; (j \in J),\\
  \sig(e,a_k) &= 0 \; (k \in \{1,\ldots,n\} \setminus I),&
  \sig(e,b_l) &= 0 \; (l \in \{1,\ldots,n\} \setminus J)
\end{align*}
is (uniquely) solvable for $e$. We claim that $B \cup \{e\}$ is free in $V$: Suppose
$$
  \la e + \sum_{i \in I} \la_i e_i + \sum_{j\in J} \mu_j f_j = 0
$$
holds with $\la, \la_1,\ldots,\la_n,\mu_1,\ldots,\mu_n \in \Rt$. Taking the vector on the left-hand side into the $\sig$-product with $f_{j_0}$ then yields $\la = 0$, which in turn implies $\la_i = 0$ ($i\in I$) and $\mu_j = 0$ ($j \in J$), since $B$ is free.

We may proceed in this way with extensions of $B$ until we reach the situation with $I = J$ and thus have reduced the proof to case 1.

\emph{Case 3, $I\setminus J \not= \emptyset$:} This is analogous to case 2 with the roles of the $e_i$'s and $f_j$'s exchanged. 
\end{proof}

We now define and study the structure-preserving maps between symplectic $\Rt$-modules.

\begin{defin} Let $(V,\sig)$ and $(W,\om)$ be symplectic $\Rt$-modules. An $\Rt$-linear map $f : V \to W$ is called \emph{symplectic} if for all $v_1, v_2 \in V$
$$
   \om(f(v_1),f(v_2)) = \sig(v_1,v_2).
$$
A symplectic isomorphism is called a \emph{symplectomorphism}.
\end{defin}

\begin{prop} Let $(V,\sig)$ and $(W,\om)$ be symplectic $\Rt$-modules and $f : V \to W$ be a symplectic map. Then the following hold:
\begin{trivlist}
\item{(i)} $f$ is injective.
\item{(ii)} If $V$ and $W$ are free and of equal finite rank, then $f$ is a symplectomorphism.
\end{trivlist}
\end{prop}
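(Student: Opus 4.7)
For part (i) the plan is to use non-degeneracy of $\sig$ together with the symplectic condition directly. Suppose $f(v)=0$ for some $v\in V$. Then for every $v'\in V$ one has
$$
  \sig(v,v') = \om(f(v),f(v')) = \om(0,f(v')) = 0,
$$
so non-degeneracy of $\sig$ forces $v=0$. No further ingredients are needed.

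For part (ii), since $V$ and $W$ are free $\Rt$-modules of the same finite rank $m$, I would fix $\Rt$-bases in each (for example the symplectic bases provided by Theorem \ref{basisthm}, which incidentally shows that $m$ is necessarily even). These bases identify both $V$ and $W$ with $\Rt^m$, and the map $f$ is represented by a square matrix $A\in M(m,\Rt)$. Part (i) tells us that $A$ is injective as a linear operator on $\Rt^m$. Invoking Lemma \ref{GKOS_Lemma} immediately upgrades this to bijectivity of $A$, and hence of $f$. Since a bijective symplectic map is by definition a symplectomorphism, this concludes the argument.

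The only non-routine step is the passage from injectivity to surjectivity. Over a field this would be the standard dimension count, but over $\Rt$ this fails in general: $\Rt$ has zero divisors and, as observed just after Lemma \ref{GKOS_Lemma}, rank for matrices over $\Rt$ is not even well-defined. The equivalence is rescued precisely by Lemma \ref{GKOS_Lemma}, and this is the only nontrivial input; everything else is a direct consequence of the definitions of symplectic form and symplectic map.
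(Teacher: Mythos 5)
Your proof is correct and follows essentially the same route as the paper: part (i) via non-degeneracy of $\sig$, and part (ii) by choosing bases to reduce $f$ to a square matrix over $\Rt$, then invoking Lemma \ref{GKOS_Lemma} to pass from injectivity to bijectivity. You also correctly identify that Lemma \ref{GKOS_Lemma} is the sole nontrivial ingredient compensating for the failure of dimension counting over $\Rt$.
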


\begin{proof} (i): If $v \in \ker(f)$, then $\sig(v,u) = \om(f(v),f(u)) = 0$ for
all $u \in V$, hence $v = 0$ by non-degeneracy of $\sig$.

(ii): By (i) we already know that $f$ is injective. Let $V$ and $W$ be of rank $k$ and let $\cB$, $\cC$ denote a basis of $V$, $W$, respectively. These bases provide isomorphisms $\Phi_\cB : V \to \Rt^k$ and $\Phi_\cC : W \to \Rt^k$ and $A := \Phi_\cC \circ f \circ \Phi_\cB^{-1} : \Rt^k \to \Rt^k$  is injective. Thanks to Lemma \ref{GKOS_Lemma} $A$ is even bijective, hence $f$ is bijective.
\end{proof}

\begin{rem}\label{submodrem} Note that the argument in the final step of the above proof shows that the following property holds:  

($\star$) \emph{If $W$ is a free $\Rt$-module of finite rank and $U$ is a free submodule of the same rank as $W$, then $U = W$.} 

The proof employs Lemma \ref{GKOS_Lemma}, which relies on the special feature of
the ring $\Rt$ that invertibility is equivalent to not being a zero divisor. (In
general, injectivity of a linear map corresponding to a square matrix is just
equivalent to having a determinant which is not a zero divisor, cf.\
\cite[Korollar 48.8]{SS:V1} or \cite[Chapter III, Section 8,
Proposition 3]{bourbaki-algebra}.) In rings without such an equivalence the
corresponding property fails to hold: For example, the integral domain $\Z$
considered as $\Z$-module is free of rank $1$ and  $E := \{ 2 l \mid l \in \Z\}
\neq \Z$ is a free submodule of rank $1$ (with basis $\{2\}$). An example which is
not based on integral domains is provided by $W := \Cinf(\R)$ as a free module of
rank $1$ over $\Cinf(\R)$ and the free submodule $U$ of rank $1$ generated by the
free subset $\{g\}$, where $g(x) = x$. We have $U \neq W$, since $f \in U$ implies
$f(0) = 0$. 
\end{rem}

Every finite dimensional symplectic vector space over $\R$ is symplectomorphic to a standard phase space $T^*(\R^n)$. 
In general, a finitely generated symplectic $\Rt$-module $(V,\sig)$ will not be symplectomorphic to a model example \ref{standardex}, in fact, in case $V$ is not free it cannot be (cf.\ Remark \ref{nonfreeremark}(i)). However, if $V$ is free and of finite rank we have an analogue of the classical result.

\begin{cor} A free symplectic $\Rt$-module $(V,\sig)$ of rank $2n$ is symplectomorphic to $(T^*(\Rt^n),\tilde{\sig})$.
\end{cor}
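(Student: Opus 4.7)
The plan is to construct an explicit symplectomorphism by matching symplectic bases on both sides. First, I would apply Theorem \ref{basisthm} to produce a symplectic basis $\{e_1,\ldots,e_n,f_1,\ldots,f_n\}$ of $(V,\sig)$, noting that the rank being $2n$ guarantees the existence of such a basis. On the target side, Example \ref{standardex} already supplies the standard symplectic basis $\{E_1,\ldots,E_n,F_1,\ldots,F_n\}$ of $T^*(\Rt^n)$, where $E_j=(\de_j,0)$ and $F_j=(0,\de_j)$, satisfying exactly the same relations as in \eqref{sympbasis}.

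Next I would define the $\Rt$-linear map $\Phi \col V \to T^*(\Rt^n)$ by the prescription $\Phi(e_j) := E_j$ and $\Phi(f_j) := F_j$ for $j=1,\ldots,n$ and extension by $\Rt$-linearity. Since $\Phi$ sends a basis of the free module $V$ bijectively to a basis of the free module $T^*(\Rt^n)$, it is an $\Rt$-module isomorphism. To verify that $\Phi$ is symplectic, it suffices by bilinearity to compare $\tilde\sig(\Phi(\,\cdot\,),\Phi(\,\cdot\,))$ and $\sig(\,\cdot\,,\,\cdot\,)$ on pairs drawn from the symplectic basis; but both forms take value $0$ on pairs $(e_j,e_l)$ and $(f_j,f_l)$ and value $\delta_{jl}$ on the pair $(f_j,e_l)$, so the two forms agree on all basis pairs and hence on all of $V\times V$.

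Finally, an isomorphism that preserves the symplectic forms is by definition a symplectomorphism, which concludes the argument. There is essentially no obstacle here: all of the work was already carried out in Theorem \ref{basisthm} (existence of a symplectic basis in any free symplectic $\Rt$-module of finite rank) and in the preceding proposition (which ensures that a bijective symplectic linear map between free modules of equal finite rank is automatically a symplectomorphism); the present corollary is simply the translation of those results into a global normal-form statement.
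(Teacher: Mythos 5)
Your proof is correct and follows essentially the same path as the paper's: invoke Theorem \ref{basisthm} to get a symplectic basis, then define the $\Rt$-linear isomorphism by sending it to the standard symplectic basis of $T^*(\Rt^n)$ and observe that the symplectic relations are preserved on basis pairs. The extra verification you spell out (checking on basis pairs, invoking the earlier proposition) is implicit in the paper's shorter ``by construction,'' so nothing is missing or added in substance.
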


\begin{proof} By Theorem \ref{basisthm} we may choose a symplectic basis $\{e_1,\ldots,e_n,f_1\ldots,f_n\}$ of $(V,\sig)$. Defining an $\Rt$-linear map $V \to \Rt^n \times \Rt^n$ by $e_j \mapsto (\de_j,0)$ and $f_j \mapsto (0,\de_j)$ ($j=1,\ldots,n$) and $\Rt$-linear extension we obtain an isomorphism which is a symplectic map by construction.  
\end{proof}

\paragraph{Matrix of a symplectomorphism:} Let $f : V \to W$ be a symplectomorphism between free symplectic $\Rt$-modules of rank $2n$. Choosing symplectic bases in $V$ and $W$ we obtain a matrix $A \in M(2n,\Rt)$ representing $f$, where $A$ is a symplectomorphism of $(T^*(\Rt^n),\tilde{\sig})$. We may now proceed as in the case of $\R$-vector spaces by writing
$$
   \tilde{\sig}((x,\xi),(y,\eta)) = 
    \begin{pmatrix} x & \!\!\! \xi \end{pmatrix}  J 
    \begin{pmatrix} y\\ \eta \end{pmatrix},
    \quad \text{ where } J = 
    \begin{pmatrix} 0 & -I_n\\ I_n & 0 \end{pmatrix},
$$
and using the symplecticity condition $A^* \tilde{\sig} = \tilde{\sig}$ to deduce the following matrix relation in $M(2n,\Rt)$
\beq\label{sympmatrix}
   A^t J A = J,
\eeq
which implies $\det(A)^2 = 1$. Moreover, if $\la \in \Ct$ we obtain from \eqref{sympmatrix} (with $I$ now denoting the identity matrix in $M(2n,\Ct)$)
\beq\label{sympdeteq}
  \det(A - \la I) = \det(J^{-1} (A^t)^{-1} J - \la I) =
   \det(J^{-1} ( (A^{-1})^t - (\la I)^t) J) 
   = \det( A^{-1} - \la I).
\eeq

We consider various notions mimicking those of significant types of subspaces of symplectic vector spaces.

\begin{defin} Let $(V,\sig)$ be a symplectic $\Rt$-module and $U$ be a submodule of $V$.
\begin{enumerate}

\item If $A \subseteq V$ is an arbitrary subset, then $A^\sig := \{ v \in V \mid \sig(v,u) = 0 \; \forall u \in A\}$ is a submodule and is called the \emph{orthogonal} or \emph{annihilator} (with respect to $\sig$) of $A$.

\item $U$ is a \emph{symplectic} submodule if $\sig \!\!\mid_{U \times U}$ is non-degenerate (equivalently, $U \cap U^\sig = \{0\}$), i.e., the restriction of $\sig$ defines a symplectic form on $U$.

\item $U$ is an \emph{isotropic} submodule if $U \subseteq U^\sig$.

\item $U$ is an \emph{involutive} (or \emph{coisotropic}) submodule if $U \supseteq U^\sig$.

\item $U$ is a \emph{Lagrangian} submodule if $U = U^\sig$.

\end{enumerate}
\end{defin}

Symplectic submodules provide direct sum decompositions, a fact that has implicitly been used in the proof of Theorem \ref{basisthm}.

\begin{prop} Let $(V,\sig)$ be a symplectic $\Rt$-module and let $U$ be a submodule of $V$. Then the following hold:
\begin{enumerate}

\item If $V = U \oplus U^\sig$, then $U$ is a symplectic submodule.

\item If $U$ is free of finite rank and symplectic, then $V = U \oplus U^\sig$.

\end{enumerate}
\end{prop}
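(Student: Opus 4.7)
The statement splits into two implications with rather different flavors. For part (i), the plan is to invoke the equivalent characterization recorded in the definition of a symplectic submodule: $\sig|_{U\times U}$ is non-degenerate if and only if $U\cap U^\sig = \{0\}$. Since an internal direct sum decomposition $V = U \oplus U^\sig$ automatically entails trivial intersection of the two summands, the conclusion follows immediately. The one thing worth spelling out is why the equivalence holds: an element $u\in U$ with $\sig(u,u')=0$ for every $u'\in U$ lies in $U^\sig$ by definition, hence in $U\cap U^\sig$; conversely, any element of $U\cap U^\sig$ witnesses degeneracy of the restricted form.

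For part (ii), the plan is to imitate Step~2 of the proof of Theorem \ref{basisthm} and construct an explicit $\Rt$-linear projection $\pi\col V\to U$ whose kernel lies in $U^\sig$. Since $U$ is free of finite rank and $\sig|_{U\times U}$ is a symplectic form on $U$, Theorem \ref{basisthm} supplies a symplectic basis $\{e_1,\dots,e_n,f_1,\dots,f_n\}$ of $U$. For $v\in V$ I would then set
\[
  \pi(v) := \sum_{j=1}^n \bigl(\sig(v,e_j)\, f_j - \sig(v,f_j)\, e_j\bigr) \in U.
\]
A short computation using the relations $\sig(f_j,e_l)=\delta_{jl}$, $\sig(e_j,e_l)=0=\sig(f_j,f_l)$ yields $\sig(\pi(v),e_k)=\sig(v,e_k)$ and $\sig(\pi(v),f_k)=\sig(v,f_k)$ for each $k$. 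Since the $e_j$ and $f_j$ generate $U$ as an $\Rt$-module, this forces $v-\pi(v)\in U^\sig$, so $v = \pi(v) + (v-\pi(v))$ exhibits $v$ as an element of $U+U^\sig$, yielding $V = U+U^\sig$.

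Combined with $U\cap U^\sig = \{0\}$, which holds because $U$ is symplectic (by the equivalence invoked in part (i)), this produces the direct sum decomposition $V = U\oplus U^\sig$. The main obstacle is not conceptual but purely bookkeeping: the availability of a symplectic basis rests entirely on Theorem \ref{basisthm}, and once it is in hand only the sign-sensitive verification of the projection identity remains. No additional features of $\Rt$ beyond those already exploited in Theorem \ref{basisthm} enter into the argument.
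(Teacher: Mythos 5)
Your proposal is correct and follows essentially the same argument as the paper: part (i) is the trivial observation that a direct sum forces $U\cap U^\sig=\{0\}$, and for part (ii) the paper also invokes Theorem \ref{basisthm} to obtain a symplectic basis of $U$ and constructs precisely the same projection $\pi(v)=\sum_j\bigl(\sig(v,e_j)f_j-\sig(v,f_j)e_j\bigr)$, verifying by direct computation that $v-\pi(v)\in U^\sig$.
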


\begin{proof} (i) is clear, since the direct sum decomposition requires $U \cap U^\sig = \{ 0 \}$. 

(ii):  Since $U$ is symplectic we have $U \cap U^\sig = \{ 0\}$. We employ Theorem \ref{basisthm} and let $U$ have rank $2k$ and $\{ e_1,\ldots,e_k,f_1,\ldots,f_k\}$ be a symplectic basis  of $U$. Then we define the projection $\pi : V \to U$ by
$$
    \pi(v) := \sum_{j=1}^k \sig(v,e_j) f_j - 
    \sum_{j=1}^k \sig(v,f_j) e_j \qquad (v \in V).
$$
By direct calculation one verifies that $v - \pi(v) \in U^\sig$, hence $v =  \pi(v) + (v - \pi(v))$ for any $v \in V$ which proves that $V = U + U^\sig$. 
\end{proof}

Although we cannot have a direct sum decomposition as above with non-symplectic submodules, we still have a general equation for the ranks.

\begin{prop} Let $(V,\sig)$ be a free symplectic $\Rt$-module of finite rank. If
$U$ is a free submodule, then $U^\sig$ is free and
$$
  \text{rank} (U) + \text{rank} (U^\sig) = \text{rank} (V). 
$$
\end{prop}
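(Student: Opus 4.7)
The plan is to reduce the statement to explicit coordinate computations over $\Rt$ by choosing a well-adapted basis of $V$, and then to exploit the invertibility of the Gramian matrix of $\sig$ (Lemma \ref{lem:nondeg form}) to identify $U^\sig$ as the image of a standard coordinate submodule under an $\Rt$-linear automorphism of $\Rt^n$.

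First I would set $n := \text{rank}(V)$ and $k := \text{rank}(U)$, pick a basis $\{u_1,\ldots,u_k\}$ of $U$, and apply Lemma \ref{extlemma} to extend it to a basis $\{u_1,\ldots,u_n\}$ of $V$. Let $G \in M(n,\Rt)$ denote the Gramian $G_{ij} := \sig(u_i,u_j)$. By non-degeneracy of $\sig$ and Lemma \ref{lem:nondeg form}, $G$ is invertible, so $G^t$ is an $\Rt$-linear automorphism of $\Rt^n$. Next I would express the defining condition for $U^\sig$ in coordinates: for $v = \sum_i c_i u_i$ with coordinate vector $\mathbf{c} \in \Rt^n$, membership $v \in U^\sig$ is equivalent to $\sig(v,u_j) = \sum_i c_i G_{ij} = (G^t \mathbf{c})_j = 0$ for $j=1,\ldots,k$, i.e.\ to $G^t \mathbf{c}$ lying in the submodule $E \subseteq \Rt^n$ of vectors whose first $k$ coordinates vanish. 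Under the coordinate isomorphism $V \cong \Rt^n$, the submodule $U^\sig$ thus corresponds precisely to $(G^t)^{-1}(E)$.

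Finally I would observe that $E$ is manifestly free of rank $n-k$ with basis $\{e_{k+1},\ldots,e_n\}$, and that the automorphism $(G^t)^{-1}$ carries this free generating set to a free generating set of $(G^t)^{-1}(E)$ (linear independence is preserved under injective linear maps, and the span is preserved under surjective ones). Translating back through the basis $\{u_1,\ldots,u_n\}$ yields that $U^\sig$ is a free submodule of $V$ of rank $n-k$, so $\text{rank}(U) + \text{rank}(U^\sig) = k + (n-k) = n = \text{rank}(V)$. I do not expect a genuine obstacle: the approach sidesteps the absence of a rank notion for arbitrary matrices over $\Rt$ by working exclusively with the invertible Gramian $G$, and relies only on the already-established tools of basis extension (Lemma \ref{extlemma}) and Gramian invertibility (Lemma \ref{lem:nondeg form}).
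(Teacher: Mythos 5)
Your proposal is correct and is essentially the same argument as the paper's: the paper defines the map $f(v) := \sum_j \sig(v,b_j)\,b_j$, observes that its matrix in the extended basis is the invertible Gramian, and shows $f(U^\sig)$ equals the span of $\{b_{k+1},\ldots,b_{2n}\}$, while you carry out exactly this identification directly in coordinates, realizing $U^\sig$ as $(G^t)^{-1}(E)$. The only cosmetic difference is that the paper invokes Lemma~\ref{GKOS_Lemma} where you invoke Lemma~\ref{lem:nondeg form}, and that you phrase the key step as pulling $E$ back through the automorphism $G^t$ rather than pushing $U^\sig$ forward; both yield the same free submodule of rank $n-k$.
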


\begin{proof} Let $V$ be of rank $2n$ and $U$ be of rank $k \leq 2n$. The cases
$k=0$ or $k=2n$ are trivial, thus we assume $0 < k < 2n$. Let $\{b_1,\ldots,b_k\}$
be a basis of $U$, which we extend by $\{b_{k+1},\ldots,b_{2n}\}$ to a basis of
$V$ (using Lemma \ref{extlemma}). Define the $\Rt$-linear map $f\col V \to V$ by
$$
  f(v) := \sum_{j=1}^{2n} \sig(v,b_j) b_j. 
$$
The matrix of $f$ with respect to the basis $\{b_1,\ldots,b_{2n}\}$ is exactly the Gramian matrix $ G = (\sig(b_i,b_j))_{1\leq i,j\leq 2n}$ of $\sig$. Thus, nondegeneracy of $\sig$ and Lemma \ref{GKOS_Lemma} imply that $f$ is an isomorphism.

We claim that $f(U^\sig) = W$, where $W := \Rt\text{-span
}\{b_{k+1},\ldots,b_{2n}\}$: Since $\sig(v,b_i) = 0$ for all $i=1,\ldots,k$ and
$v\in U^\sig$, we clearly have $f(U^\sig) \subseteq W$. To prove the reverse
inclusion, let $w \in W$ arbitrary.  
Since $f$ is bijective, there exists
a unique $v= \sum_{l=1}^{2n} x_l b_l\in V$ with $f(v)=w$ and it remains to show that $v\in U^\sigma$.
Write the basis expansion of $w$ as $w =
\sum_{j=k+1}^{2n} \la_j b_j$, 
and let $u = \sum_{i=1}^k \mu_i b_i \in U$ be arbitrary.
Then in matrix notation with $x = (x_1,\ldots,x_{2n})^t$ we obtain $G^t x =
(0,\ldots,0,\la_{k+1},\ldots,\la_{2n})^t$. Thus 
$$
  \sig(v,u) = x^t \cdot G\cdot (\mu_1,\ldots,\mu_k,0,\ldots,0)^t = (0,\ldots,0,\la_{k+1},\ldots,\la_{2n}) \cdot (\mu_1,\ldots,\mu_k,0,\ldots,0)^t = 0,
$$
hence $v \in U^\sig$ and $f(U^\sig) \supseteq W$.  

We have shown that $U^\sig$ is isomorphic to the free submodule $W$ under $f$, hence $U^\sig$ itself is free and of rank $2n - k$.
\end{proof}

As in the case of symplectic vector spaces, the general rank equation in the above proposition allows for a convenient characterization of Lagrangian submodules.

\begin{cor} Let $(V,\sig)$ be a free symplectic $\Rt$-module of finite rank. If
$U$ is a free submodule, then the following are equivalent:
\begin{enumerate}
\item $U$ is Lagrangian.

\item $U$ is isotropic and $\text{rank}(U) = \text{rank}(V)/2$.
\end{enumerate}
\end{cor}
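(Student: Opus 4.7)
The plan is to derive both implications directly from the rank equation in the preceding proposition, which guarantees that $U^\sig$ is free and $\text{rank}(U) + \text{rank}(U^\sig) = \text{rank}(V)$.

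For the implication (i) $\Rightarrow$ (ii), I would argue as follows. If $U = U^\sig$, then $U \subseteq U^\sig$, so $U$ is isotropic by definition. Moreover, $U$ and $U^\sig$ have the same rank, so the rank equation yields $2 \, \text{rank}(U) = \text{rank}(V)$, i.e.\ $\text{rank}(U) = \text{rank}(V)/2$. This direction is essentially bookkeeping.

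For the converse (ii) $\Rightarrow$ (i), suppose $U$ is isotropic with $\text{rank}(U) = \text{rank}(V)/2$. Then $U \subseteq U^\sig$, and by the rank equation $\text{rank}(U^\sig) = \text{rank}(V) - \text{rank}(U) = \text{rank}(V)/2 = \text{rank}(U)$. Now I want to conclude $U = U^\sig$, and this is exactly the situation covered by property ($\star$) in Remark \ref{submodrem}: a free submodule (namely $U$) of a free $\Rt$-module (namely $U^\sig$, which is free by the preceding proposition) of equal finite rank must coincide with the ambient module. Hence $U = U^\sig$, so $U$ is Lagrangian.

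The only potentially subtle step is the appeal to ($\star$) in the second direction, since over a general commutative ring a free submodule of the same rank need not equal the ambient free module. Here it works because property ($\star$) was established earlier using Lemma \ref{GKOS_Lemma}, and this is precisely the point where the specific algebraic structure of $\Rt$ (invertibility equivalent to not being a zero divisor) enters. All other steps are formal consequences of the rank equation and the definitions.
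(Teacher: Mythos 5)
Your proof is correct and follows the same route as the paper's: both directions reduce to the rank equation from the preceding proposition, with the converse implication closed by an appeal to property ($\star$) from Remark \ref{submodrem}. Your remark on why ($\star$) is the crucial non-formal step is an accurate reading of the paper's own emphasis.
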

\begin{proof} (i) $\Rightarrow$ (ii): We have $U = U^\sig$ by definition and the above proposition then implies $ \text{rank} (V) = \text{rank} (U) + \text{rank} (U^\sig) = 2 \,\text{rank}(U)$.

(ii) $\Rightarrow$ (i): The hypothesis on the rank and the above proposition yield $\text{rank}(U^\sig) = \text{rank}(U)$. Isotropy means $U \subseteq U^\sig$, thus we obtain from the observation ($\star$) in Remark \ref{submodrem} that $U = U^\sig$.
\end{proof}

The simplest example of a Lagrangian submodule is of course $U = \Rt^n \times
\{0\}$ as a submodule of $(T^*(\Rt^n),\tilde{\sig})$ and the following theorem
shows that on an abstract level this is the standard form of a Lagrangian. Recall
that the dual $M^*$ of a module $M$ over the ring $R$ consists of the $R$-linear
maps from $M$ to $R$ and that the concept and construction of dual bases in $M^*$
exist for free modules of finite rank in analogy to the case of finite dimensional
vector spaces.

\begin{thm} Let $(V,\sig)$ be a free symplectic $\Rt$-module of finite rank and
$U$ be a free submodule. If $U$ is Lagrangian, then $(V, \sig)$ is
symplectomorphic to $(U \oplus U^*,\om)$, where 
$$ 
 \om(u\oplus\alpha,v\oplus\beta) := \alpha(v) - \beta(u)  .
$$
\end{thm}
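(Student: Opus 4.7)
The plan is to reduce the statement to the existence of a symplectic basis adapted to $U$, and then read off the claimed symplectomorphism from the dual-basis construction. Let $V$ be of rank $2n$. Since $U$ is free and Lagrangian, the preceding corollary forces $\text{rank}(U)=n$, so $U^*$ is also free of rank $n$ and $U\oplus U^*$ has rank $2n$.

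First I would pick any basis $\{e_1,\ldots,e_n\}$ of $U$. Because $U$ is Lagrangian we have $U\subseteq U^\sig$, so $\sig(e_i,e_k)=0$ for all $i,k$. This means $\{e_1,\ldots,e_n\}$ plays the role of a partial symplectic basis with index sets $I=\{1,\ldots,n\}$ and $J=\emptyset$ in Proposition \ref{basisextprop}. Applying that proposition I obtain vectors $f_1,\ldots,f_n\in V$ such that $\{e_1,\ldots,e_n,f_1,\ldots,f_n\}$ is a symplectic basis of $(V,\sig)$ in the sense of \eqref{sympbasis}.

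Next, let $\{e_1^*,\ldots,e_n^*\}$ denote the basis of $U^*$ dual to $\{e_1,\ldots,e_n\}$, so that $e_i^*(e_j)=\de_{ij}$. Define an $\Rt$-linear map $\Phi\col U\oplus U^*\to V$ by
\[
  \Phi(e_j\oplus 0) = e_j, \qquad \Phi(0\oplus e_j^*) = f_j \qquad (j=1,\ldots,n),
\]
and extend linearly. Since $\Phi$ sends a basis to a basis, it is an $\Rt$-linear isomorphism. It remains to verify the identity $\sig(\Phi(u\oplus\al),\Phi(v\oplus\be)) = \om(u\oplus\al,v\oplus\be)$ for all $u,v\in U$ and $\al,\be\in U^*$. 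Writing $u=\sum a_i e_i$, $v=\sum b_i e_i$, $\al=\sum c_j e_j^*$, $\be=\sum d_j e_j^*$ and expanding $\sig$ on $\Phi(u\oplus\al)=\sum a_i e_i+\sum c_j f_j$ and $\Phi(v\oplus\be)=\sum b_i e_i+\sum d_j f_j$, the symplectic relations \eqref{sympbasis} (noting $\sig(e_i,f_j)=-\de_{ij}$) leave only the mixed terms, giving $\sum_j c_j b_j -\sum_i a_i d_i = \al(v)-\be(u) = \om(u\oplus\al,v\oplus\be)$. Thus $\Phi$ is a symplectomorphism.

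There is no substantial obstacle: the only nontrivial input is the symplectic basis extension, which is exactly Proposition \ref{basisextprop}, and the bookkeeping that $U$ Lagrangian together with the rank equation forces the correct count $n+n=2n$ so that $\Phi$ is automatically bijective (alternatively via property $(\star)$ of Remark \ref{submodrem}). Skew-symmetry and non-degeneracy of $\om$ on $U\oplus U^*$ are immediate from its definition and need no separate argument, since $\Phi^*\sig=\om$ together with the fact that $\sig$ is a symplectic form transfer these properties automatically.
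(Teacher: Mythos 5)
Your proposal is correct and follows essentially the same route as the paper's own proof: pick a basis of $U$, observe that the Lagrangian condition places you in Proposition \ref{basisextprop} with $I=\{1,\ldots,n\}$, $J=\emptyset$, extend to a symplectic basis, and match it with the dual basis of $U^*$; the only cosmetic difference is that you define $\Phi\col U\oplus U^*\to V$ (the inverse of the paper's map) and verify $\Phi^*\sig=\om$ by explicit expansion, whereas the paper notes directly that the image of the symplectic basis is an $\om$-symplectic basis of $U\oplus U^*$.
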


\begin{proof} Let $V$ be of rank $2n$, then $U$ has rank $n$. Let $\{ e_1,\ldots, e_n\}$ be a basis of $U$. Since $U^\sig = U$, we are in the situation of Proposition \ref{basisextprop} with $I = \{1,\ldots,n\}$ and $J = \emptyset$ and may extend the basis of $U$ to obtain a symplectic basis $\{e_1,\ldots,e_n,f_1,\ldots,f_n\}$ of $(V,\sig)$.

Let $\{e_1^*,\ldots,e_n^*\}$ be the basis of $U^*$ which is dual to $\{e_1,\ldots,e_n\}$ and define $\Phi \col V \to U \oplus U^*$ by $\Rt$-linear extension of the assignments $e_i \mapsto (e_i,0)$ and $f_j \mapsto (0,e_j^*)$. Since $\Phi$ maps a basis to a basis, it is an isomorphism of $\Rt$-modules. Moreover, $\{(e_1,0),\ldots,(0,e_n^*)\}$ is by construction an $\om$-symplectic basis of $U\oplus U^*$ (which also proves that $\om$ is a symplectic form), hence $\Phi$ is a symplectomorphism.
\end{proof}

 \subsection{Matrices over $\Ct$: symmetry, eigenvalues and spectral properties} \label{matrixsec}

When trying to introduce a general concept of eigenvalues of matrices $A \in M(n,\Ct)$ a first natural
idea is to consider generalized complex numbers $\la$ for which $\det(A - \la I)$ is a zero divisor in $\Ct$, 
or equivalently, for which $\det(A - \la I)$ is not invertible.

By Lemma \ref{GKOS_Lemma}, this is equivalent to 
$\ker(A - \la I) \not= \{0\}$. In fact, the same
condition is used, e.g., in \cite[Chapter V, Section 7, Definition 1]{OR:74} to
define the notion of eigenvalue for matrices over arbitrary commutative
rings with unit. Moreover, every vector in $\ker(A - \la I) \setminus \{0\}$ is
then also called eigenvector. However, cautioned by somewhat pathological
effects due to zero divisors (see Example \ref{diagex}) we will use
a different definition, see Def.\ \ref{evdef} below.

\begin{ex}\label{diagex} Let $c \in \Rt$ such that $c \not= 0$, $c \not=1$, and $c
(1-c) = 0$.\footnote{For example, $c = e_S$ with $S \subseteq I$ such that $0 \in
\overline{S}$ and $0 \in \overline{S^c}$ would do.} Consider the matrix $A =
\begin{pmatrix} 1-c & 0\\ 0 & c   \end{pmatrix}$, then we have
$$ 
(\star) \qquad  \det(A - \la I) = (1 - c - \la)(c - \la)  = \la^2 - \la + c (1-c)
= \la(\la-1).
$$
 
Hence we obtain the following equivalence: 
$\det(A - \la I)$ is not invertible
iff  
$\la$ or $\la - 1$ is a zero divisor. 
Note that only in case $\la (\la - 1) = 0$ (including the instances
$\la = 0$, $\la = 1$, $\la = c$, and $\la = 1-c$) do we obtain $\det(A - \la I) =
0$. Furthermore, we
observe that, regardless of the precise choice of $c$, any zero divisor
$\la \in \Ct$ produces a non-invertible $\det(A - \la I)$.   
We investigate the submodules $E(\la) := \ker(A - \la I) \subseteq \Ct^2$ for the
various cases of $\la$ with this property: We have $E(\la) = \{ (x,y) \mid (1-c-\la)x
= 0, (c-\la)y = 0 \}$. 

\begin{trivlist}

\item{\underline{Case 1, $\la (\la - 1) = 0$:}} The vector $v_\la :=
(c-\la,1-c-\la)$ belongs to
$E(\la)$ as is seen directly from ($\star$). Moreover, $v_\la$ is free,
since $\mu (c-\la) = 0$, $\mu (1-c-\la) = 0$ implies $\mu = \mu c + \mu \la = 2
\mu c$, hence $\mu c = 2 \mu c^2 = 2 \mu c$, which means $\mu c = 0$ and in turn
yields $\mu = 0$. We will show that $E(\la) = \Span \{ v_\la \}$. 

First, we extend
$v_\la$ to a basis of $\Ct^2$ by the vector $w := (1-c-\la,c-\la)$. To see that
$\{ v_\la, w \}$ is a basis we note that $\det(v_\la \; w) = (2c - 1)(1 - 2
\la)$ is invertible, since $1 - 2 r$ is an invertible number for any $r$
satisfying $r(r-1) = 0$.\footnote{If $\mu (1-2r)=0$, then $\mu = 2 r \mu$, hence
$\mu r = 2 \mu r^2 = 2 \mu r$, i.e., $\mu r = 0$ and thus $\mu = 0$.} Second, we
use the basis representation $(x,y) = r v_\la + s w$ and ($\star$) in the form $(1
- c - \la)(c - \la) = \la (\la - 1) = 0$ to obtain that $(x,y) \in E(\la)$ implies
$0 = s (1-c-\la)^2 = s ((1 - 2c)(1 - 2\la) + (c-\la)^2)$ and $0 =  s (c-\la)^2$.
Adding the
equations gives $0 = s (1-2c)(1 - 2\la)$ and thus $s = 0$ by invertibility of
$(1-2c)(1 - 2\la)$.
Therefore $(x,y) \in \Span \{ v_\la \}$.

\item{\underline{Case 2, $\la (\la - 1) \neq 0$:}} We claim that there is no free
vector in $E(\la)$. Indeed any $v \in E(\la)$ is of the form $v = (r,s)$ with $r
(1-c-\la) = 0$ and $s (c-\la) = 0$, hence $t v = (0,0)$, where $t :=
(c-\la)(1-c-\la) = \la (\la - 1) \neq 0$ by ($\star$).

\end{trivlist}

\end{ex}

A further reason for not adopting the above concept as a definition of eigenvalues is the
observation that non-invertibility of $\det(A - \la I)$ implies that there exists some
$e_S\not=0$ with $\det(A - \la I)\cdot e_S=0$. But then for {\em any} $\mu\in \Ct$, also
$\det(A - (\la + \mu e_{S^c}) I)$ is non-invertible, despite the fact that $\mu$ has no 
relation to the matrix $A$.

\begin{rem} In  \cite[Definition 4.5]{EbM-LGiAGF} a definition of ordered sets of
\emph{generalized eigenvectors} of a symmetric matrix $A$ over $\Rt$ was given
in terms of the sizes of the real parts of eigenvalues of a representing net
$(A_\eps)$ (cf.\ Sec.\ \ref{skewsec} below). In this sense, the matrix $A$ in the above example has the generalized
eigenvalues $0$ and $1$. However, this definition does not include all cases of
numbers $\la$ such that we can find a free vector $v$ satisfying $A v = \la
v$, e.g., $\la = c$ with $v = (0,1)$ and $\la = 1-c$ with $v = (1,0)$ are
missing.
\end{rem}

In view of the above considerations we introduce the notion of eigenvalue using a stronger
condition than the non-invertibility of $\det(A - \la I)$.

\begin{defin}\label{evdef} Let $A \in M(n,\Ct)$. A generalized complex number $\la$ is an
\emph{eigenvalue} of $A$ if there exists a free vector $x \in \Ct^n$ such that $A
x = \la x$. The free vector $x$ is then called an \emph{eigenvector} for $A$
associated with $\la$.
\end{defin}

We note that by Lemma \ref{complexEberhard} (ii) this definition agrees with the one introduced by H.\ Vernaeve in \cite[Def.\ 4.8]{Vernaeve:08}.

\begin{prop} \begin{enumerate}

\item If $A \in M(n,\Ct)$ is Hermitian and $\la$ is an eigenvalue of $A$, then $\bar{\la} = \la$, i.e., $\la \in \Rt$.

\item If $A \in M(n,\Ct)$ is skew-Hermitian and $\la$ is an eigenvalue of $A$, then $\la^2 \in \Rt$.

\end{enumerate}
\end{prop}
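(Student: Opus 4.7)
The plan is to exploit the standard sesquilinear argument from the vector space setting, made valid in the Colombeau context by the fact that the inner product $\langle x, x\rangle$ of a free vector is invertible.

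First I would fix an eigenvalue $\la$ with associated eigenvector $x\in\Ct^n$, free by definition, and invoke Lemma \ref{complexEberhard} to conclude that $\langle x, x\rangle$ is strictly positive in $\Rt$ and therefore invertible in $\Ct$. Writing $\langle\,\cdot\,,\,\cdot\,\rangle$ for the standard unitary inner product (conjugate-linear in the second slot, say), the identity $\langle A x, x\rangle = \langle x, A^* x\rangle$ holds componentwise on representatives and hence in $\Ct$.

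For part (i), with $A^* = A$, I would compute both sides of $\langle A x, x\rangle = \langle x, A x\rangle$: the left-hand side equals $\la \langle x, x\rangle$ and the right-hand side equals $\bar{\la}\,\langle x, x\rangle$. Subtracting gives $(\la - \bar{\la})\,\langle x, x\rangle = 0$, and multiplying by the inverse of $\langle x, x\rangle$ yields $\la = \bar\la$, so $\la \in \Rt$.

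For part (ii), with $A^* = -A$, the same computation produces $\langle A x, x\rangle = \la\langle x, x\rangle$ and $\langle x, A^* x\rangle = \langle x, -Ax\rangle = -\bar\la\,\langle x, x\rangle$. Equating these and canceling the invertible factor $\langle x, x\rangle$ yields $\la = -\bar\la$. Taking complex conjugates then gives $\overline{\la^2} = \bar\la^2 = (-\la)^2 = \la^2$, hence $\la^2\in\Rt$. The only real subtlety here, and the reason the proof works cleanly in the Colombeau ring where cancellation is generally unavailable, is the invertibility of $\langle x, x\rangle$ for a free eigenvector — so I would be careful to state that step explicitly via Lemma \ref{complexEberhard}, since this is where the structural difference from naive ``non-degenerate determinant'' eigenvalues (cf.\ Example \ref{diagex}) comes into play.
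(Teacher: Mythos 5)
Your proof of (i) is correct and takes essentially the same route as the paper: fix a free eigenvector $x$, use Lemma \ref{complexEberhard} to make $\dual{x}{x}$ strictly positive hence invertible, and cancel it in $\la\dual{x}{x}=\bar\la\dual{x}{x}$. For (ii) the paper instead reduces to (i) by observing that $A^2$ is Hermitian (as $(A^2)^*=(-A)^2=A^2$) and that $A^2x=\la^2 x$, so $\la^2$ is an eigenvalue of a Hermitian matrix and hence real. Your direct computation — deriving $\la=-\bar\la$ from the same sesquilinear identity and then noting $\overline{\la^2}=(-\la)^2=\la^2$ — is equally valid and actually yields the slightly sharper conclusion that $\la$ is purely imaginary, not merely that $\la^2\in\Rt$. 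Both arguments rest on the same key structural fact (invertibility of $\dual{x}{x}$ for free $x$), which you correctly flagged as the point where the Colombeau-specific hypothesis on eigenvectors is used.
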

\begin{proof} 
To prove (i) let $x \in \Ct^n$ be a free vector such that $A x = \la x$ and let
$\dual{y}{z} = \sum_{j=1}^n y_j \bar{z_j}$ denote the standard Hermitian form on
$\Ct^n$. Since $x$ is free,  Lemma \ref{complexEberhard} yields that $\dual{x}{x}$
is strictly positive, hence invertible. Hence we may follow the classical
argument
$$
  \la \dual{x}{x} = \dual{Ax}{x} = \dual{x}{Ax} = 
  \bar{\la}\dual{x}{x}
$$ 
and deduce that $\bar{\la} = \la$. Clearly, (ii) follows from (i), since $\la^2$
is then an eigenvalue of the Hermitian matrix $A^2$.
\end{proof}
 For any eigenvalue of $A$, 
$\det(A - \la I)$ is not invertible (again by Lemma \ref{GKOS_Lemma}). 
The next proposition will show that in fact $\la$ is an eigenvalue if and only if  $\det(A - \la I)=0$.  
In its proof we will use the following observation.
\begin{rem}\label{dieda} Let $A: \Rt^n\to \Rt^n$ linear and $B:\Ct^n\to \Ct^n$ its complexification,
$Bz:=Ax+iAy$ for $z=x+iy$. Then 
\begin{itemize}
\item[(i)] $\ker B = \ker A + i \ker A$.
\item[(ii)]	$\exists v$ free with $v\in \ker A$ $\Leftrightarrow$ $\exists z$ free with $z\in \ker B$.

To see this, note first that given $v$ free in $\ker A$, $z:=v+i0$ is free in $\ker B$. Conversely,
let $z$ be a free vector in $\ker B$, and assume without loss that $\|z\|=1$. Write $z=x+iy$, so that
$x$, $y\in \ker A$. Then since $1=\|z\|^2=\|x\|^2+\|y\|^2$, for any representative $z_\eps=x_\eps+iy_\eps$
there exists some $\eps_0>0$ such that $\forall \eps<\eps_0$, $\|x_\eps\|^2+\|y_\eps\|^2>\frac{1}{2}$.
Now for any $\eps\in (0,\eps_0]$, we define $\alpha_\eps$, $\beta_\eps$ as follows:
\begin{itemize}
\item if $\|x_\eps\|^2>\frac{1}{4}$, set $\alpha_\eps=1$, $\beta_\eps=0$
\item if $\|x_\eps\|^2\le\frac{1}{4}$, set $\alpha_\eps=0$, $\beta_\eps=1$
\end{itemize}
Then $(\alpha_\eps)$, $(\beta_\eps)$ are representatives of generalized numbers $\alpha$, $\beta$
and $v:=\alpha x + \beta y$ satisfies $\|v\|^2\ge\frac{1}{4}$, hence is free, and $Av=\alpha Ax+\beta Ay=0$.
\end{itemize}
\end{rem}

\begin{prop}\label{evchar} 
\begin{enumerate}

\item  $\la$ is an eigenvalue of $A \in M(n,\Ct)$ if and only if $\det(A - \la I) = 0$.

\item Eigenvalues of invertible matrices are invertible.
\item  
If $A$ is a symplectic matrix then $\la$ is an eigenvalue of $A$ if and only if
$\la^{-1}$ is. 
\end{enumerate}
\end{prop}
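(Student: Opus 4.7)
The plan is to treat (i), (ii), (iii) in order, since (ii) relies on (i) and (iii) on both.

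For (i), the ``only if'' direction is immediate from the adjugate identity: if $x$ is a free eigenvector for $\la$, multiplying $(A-\la I)x=0$ on the left by $\mathrm{adj}(A-\la I)$ gives $\det(A-\la I)\cdot x=0$; Lemma~\ref{complexEberhard}(iii) says the coefficients of $x$ span $\Ct$ as an ideal, which forces $\det(A-\la I)=0$. The converse is the crux. I would argue $\eps$-wise: fix representatives $(A_\eps)$, $(\la_\eps)$, so $|\det(A_\eps-\la_\eps I)|$ is negligible. The classical factorisation $\det(A_\eps-\la_\eps I)=\prod_{i=1}^n(\mu_{i,\eps}-\la_\eps)$ (with $\mu_{1,\eps},\ldots,\mu_{n,\eps}$ the eigenvalues of $A_\eps$) together with the elementary bound $\min_i|\mu_{i,\eps}-\la_\eps|\le|\det(A_\eps-\la_\eps I)|^{1/n}$ produces an eigenvalue $\mu_\eps$ of $A_\eps$ with $|\mu_\eps-\la_\eps|$ negligible. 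Picking a classical unit eigenvector $v_\eps$ of $A_\eps$ for $\mu_\eps$, the net $(v_\eps)_\eps$ is moderate, $v:=[(v_\eps)]$ satisfies $\langle v,v\rangle=1$ and is therefore free by Lemma~\ref{complexEberhard}, and $(A-\la I)v=[(\mu_\eps-\la_\eps)v_\eps]=0$.

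For (ii), I would use (i) to rewrite the hypothesis as $\det(A-\la I)=0$ and then expand the left-hand side as a polynomial in $\la$: its constant term is precisely $\det(A)$, while every higher-order term carries a factor of $\la$. Hence $\det(A)=\la\cdot q$ for some $q\in\Ct$, and invertibility of $\det(A)$ forces invertibility of $\la$.

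For (iii), the starting observation is that $A$ symplectic implies $\det(A)^2=1$ by~\eqref{sympmatrix}, whence $A$ is invertible. The identity~\eqref{sympdeteq} already gives $\det(A-\la I)=\det(A^{-1}-\la I)$; on the other hand, whenever $\la$ is invertible, the factorisation $A^{-1}-\la I=-\la A^{-1}(A-\la^{-1}I)$ yields $\det(A^{-1}-\la I)=(-\la)^n\det(A^{-1})\det(A-\la^{-1}I)$, with the prefactor invertible. If $\la$ is an eigenvalue of $A$, (ii) ensures $\la$ is invertible, so combining the two identities gives $\det(A-\la^{-1}I)=0$, and (i) then produces a free eigenvector for $\la^{-1}$. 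The converse follows by applying this implication with $\la$ replaced by $\la^{-1}$.

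The main obstacle is the converse of (i). Example~\ref{diagex} shows that non-invertibility of $\det(A-\la I)$ is genuinely weaker than its vanishing, so the mere existence of a zero divisor in the kernel is not enough: one must extract a \emph{free} eigenvector from the sharper information $\det(A-\la I)=0$. The $n$-th-root bound is precisely what converts negligibility of the \emph{product} $\prod_i(\mu_{i,\eps}-\la_\eps)$ into negligibility of a single factor, which in turn makes Lemma~\ref{complexEberhard} applicable to a Colombeau-class unit eigenvector.
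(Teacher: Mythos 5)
Your proposal is correct and, for parts (i) and (iii), it takes essentially the paper's route: the ``only if'' in (i) is the adjugate/extend-to-a-basis argument, and the ``if'' direction is the same $\eps$-wise construction with the $n$-th root bound $\min_i|\mu_{i,\eps}-\la_\eps|\le|\det(A_\eps-\la_\eps I)|^{1/n}$ extracting a negligible factor and hence a norm-one Colombeau eigenvector (the paper applies the bound to $B_\eps=A_\eps-\la_\eps I$, you to $A_\eps$ itself; this is cosmetic). For (iii) the paper just says ``immediate from (i) and \eqref{sympdeteq}''; you make the same argument explicit via $A^{-1}-\la I=-\la A^{-1}(A-\la^{-1}I)$ and note where (ii) is needed to invert $\la$, which is a genuine (and helpful) sharpening of what the paper leaves implicit.

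Part (ii) is where you genuinely diverge. The paper argues by contradiction: assuming $\la$ is a zero divisor, it picks $\mu\neq 0$ with $\mu\la=0$, deduces $\mu^n\det(A)=\mu^n\det(A-\la I)=0$, and then invokes reducedness of $\Ct$ to conclude $\mu=0$. You instead observe that $\det(A-\la I)$, viewed as a polynomial in $\la$, has constant term $\det(A)$, so $\det(A-\la I)=0$ gives $\det(A)=-\la\,q(\la)$, and invertibility of $\det(A)$ forces invertibility of $\la$ directly. Your version is shorter and avoids any appeal to absence of nilpotents; it is a clean alternative that would work over any commutative ring in which ``invertible'' is the target conclusion (the paper's version instead exploits the specific structural fact that $\Ct$ is reduced). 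Both are correct.
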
\label{evprop}
\begin{proof} (i): Given an eigenvalue $\la$, choose a free vector $x \in \Ct^n$ such that $A x = \la x$. By Lemma \ref{complexEberhard} we may extend $\{ x\}$ to a basis $\cB$ of $\Ct^n$, where $x$ occurs as the first basis vector. The matrix representation of $A - \la I$ with respect to the basis $\cB$ has the zero vector as its first column, hence $\det(A - \la I) = 0$. (Alternatively, we may note that by \cite[Korollar 48.8]{SS:V2} for a general ring $R$ we have: if $x \in R^n$,  $B \in M(n,R)$, and $B\cdot x = 0$, then $\det(B) \cdot x = 0$. In our case (with $B:=A-\la I$), $x$ is free, implying $\det(B) = 0$.)

Conversely, let $\det(A - \la I) = 0$ and set $B:=A-\la I$. We claim that there exists some $v\in \Ct^n$ with
$\|v\|=1$ (hence free) such that $Bv=0$. By Rem.\ \ref{dieda} (ii) this will give the result.
Let $(B_\eps)_\eps$ be some representative of $B$.
Then for each $\eps\in I$ the polynomial $\det (B_\eps-\mu I)$ factors over $\C$ with zeros $\mu_{i\eps}$ $(i\in \{1,\dots,n\})$
and we may assume that $|\mu_{1\eps}|\le |\mu_{2\eps}|\le \dots \le |\mu_{n\eps}|$. For each $\eps$,
let $v_\eps$ be an eigenvector of $B_\eps$ to the eigenvalue $\mu_{1\eps}$ with $\|v_{\eps}\|=1$. Then $v:=[(v_\eps)_\eps]$ has
norm $1$. Moreover, $(\mu_{1\eps})\in \cN$: otherwise, there would exist some sequence $\eps_j\to 0$ and some
$q$ such that $|\mu_{1\eps_j}|\ge \eps_j^q$ for all $j$. But then 
$$
|\det B_{\eps_j}| = |\mu_{1\eps_j}|\dots |\mu_{n\eps_j}| \ge \eps_j^{q n}
$$
for all $j$, contradicting $\det B = 0$. It follows that $\|B_\eps v_\eps\| = \|\mu_{1\eps} v_\eps\|$ is negligible
and therefore $Bv=0$ in $\Ct^n$, as claimed.

  For an alternative proof of (i) we refer to \cite[Prop.\ 4.9]{Vernaeve:08} 

(ii): Let $A$ be an invertible matrix and let $\la$ be an eigenvalue of $A$. By (i) we have $\det(A - \la I) = 0$.
Suppose that $\la$ is not invertible. Then $\la$ is a zero divisor and we can find $\mu \in \Ct$, $\mu \neq 0$, such that $\mu \la = 0$. We obtain
$$
   \mu^n \det(A) = \det(\mu A) = \det(\mu A - \mu \la I) 
   = \mu^n \det(A - \la I) = 0,
$$
hence $\mu^n = 0$, since $\det(A)$ is invertible. Since there are no (non-zero) nilpotent elements in $\Ct$, we conclude that $\mu = 0$, which is a contradiction.

(iii) This is immediate from (i) and \eqref{sympdeteq}. 
\end{proof}
\begin{ex}
(i)
As is immediate from Prop.\ \ref{evchar}, if $\la_1,\ldots,\la_n \in \Ct$, $B = \text{diag}(\la_1,\ldots,\la_n)$, 
and $\la \in \Ct$, then:
$$
  \la \text{ is an eigenvalue of } B \quad\Longleftrightarrow\quad 
   \mathop{\Pi}\limits_{k=1}^n (\la - \la_k) = 0. \quad
$$ 
(ii) Let $A\in M(n,\K)$ be a classical matrix. 
Then the generalized eigenvalues of $A$ are precisely the interleavings of 
the classical eigenvalues of $A$. Here, by an interleaving of an $m$-tuple
$(\lambda_1,\dots,\lambda_m)$ we mean any $\lambda\in \Kt$ such that
there exists a map $j: I \to \{1,\dots,m\}$ with $\lambda = [(\lambda_{j(\eps)})_\eps]$
(cf.\ \cite{OV:08}). This follows from Prop.\ \ref{evchar} (i), together with the observation
that any polynomial with coefficients in $\K$ has as zeros in $\Kt$ precisely
the interleavings of its classical zeros (cf.\ the proof of \cite[Prop.\ 2.12]{O:92}). 
\end{ex}

A far-reaching generalization of (ii) in the previous remark is the following result
due to H.\ Vernaeve:

\bthm \label{vmain} Let $A\in M(n,\Ct)$ and denote by $p(\lambda) = \lambda^n + a_{n-1}\la^{n-1} + \dots +a_1 \la + a_0$
$(a_0,\dots,a_{n-1} \in \Ct)$ the (normalized) characteristic polynomial $(p(\lambda)=\pm \det(A-\la I))$.
Then there exist $\la_1,\dots,\la_n \in \Ct$ such that $p(\la) = (\la - \la_1)\dots (\la-\la_m)$.
Moreover, given any $n$-tuple $(\lambda_1,\dots,\lambda_n)$ with this property, any eigenvalue
$\la$ of $A$ is an interleaving of $(\lambda_1,\dots,\lambda_n)$:
$$
\exists \{S_1,\dots,S_n\} \ \text{partition of} \ (0,1]: \la = \la_1 e_{S_1} + \dots \la_n e_{S_N}
$$
\ethm
\begin{proof} This is immediate from \cite[Lemma 4.7]{Vernaeve:08} and Prop.\ \ref{evchar} (i).
\end{proof}
This result completely clarifies the structure of the set of eigenvalues of a general matrix
$A\in M(n,\Ct)$. In the subsequent sections we will see that for certain specific classes of matrices
(e.g., symmetric or skew-symmetric) it is possible to uniquely single out distinguished $n$-tuples 
(e.g., with certain order properties) as in Th.\ \ref{vmain}. The only additional degree of freedom
in the set of all eigenvalues is then induced by the interleavings of these distinguished $n$-tuples.

 \subsection{Skew-symmetric and Hermitian matrices} \label{skewsec}
Motivated by applications in low-regularity Riemannian and Lorentzian geometry,
a thorough study of symmetric matrices over $\Rt$ was carried out in
\cite{EbM-LGiAGF}. As the Gramian matrix of a symplectic form is skew-symmetric, 
in this section we present a similar analysis of skew-symmetric generalized
matrices.

We call a matrix $A\in M(n,\Rt)$ skew-symmetric if $A^t=-A$. The first basic
observation is that any skew symmetric matrix possesses a skew-symmetric
representative:

 \blem \label{lem:s-s representative}
 Let $A\in M(n,\Rt)$. Then the following are equivalent:
 \begin{itemize}
 \item[(i)] $A$ is skew-symmetric.
 \item[(ii)] There exists a skew-symmetric representative
 $(A_\eps)_\eps=((a_{ij}^\eps)_{i,j})_\eps$ of $A$.
 \end{itemize}
 \elem

 \pr It clearly suffices to prove (i)\ $\Rightarrow$ (ii). To this end, take an arbitrary representative
 $((\tilde{a}_{ij}^\eps)_{i,j})_\eps$ of $A$, and set
 $a_{ij}^\eps := \frac{1}{2}(\tilde{a}_{ij}^\eps-\tilde{a}_{ji}^\eps)$.
 Then $(a_{ij}^\eps-\tilde{a}_{ij}^\eps)_\eps\in \cN$ by skew-symmetry of $A$,
 so $(a_{ij}^\eps)_\eps$ is the desired representative.
  \ep
  
In \cite{EbM-LGiAGF}, a specific notion of eigenvalues for symmetric matrices
in $M(n,\Rt)$ is defined.  
Given $A=A^t\in M(n,\Rt)$, with
representative $(A_\eps)_\eps$, for each
$\eps$ let $\theta_{k,\eps}=\mu_{k,\eps} + i \nu_{k,\eps}$ ($1\le k\le n$)
be the eigenvalues of $A_\eps$, ordered by the size of their real parts:
$\mu_{1,\eps}\ge \dots \ge \mu_{n,\eps}$. 
Then Mayerhofer calls the generalized numbers $\theta_k := [(\theta_{k,\eps})_\eps]$ ($1\le i \le
n$) the eigenvalues of $A$. By Prop.\ \ref{evchar} (i) the $\theta_k$ are eigenvalues of $A$ in the sense of Def.\ \ref{evdef}. 
Moreover, by Th.\ \ref{vmain}, any other 
eigenvalue of $A$ is an interleaving of the $\theta_k$.

For any symmetric $A\in M(n,\Rt)$, the $\theta_k$ as above are well-defined real 
generalized numbers, independent of the representative of $A$ used for defining them 
(\cite[Lemma 4.6]{EbM-LGiAGF}).   The proof of this fact 
relies on the following numerical estimate for arbitrary perturbations
of Hermitian matrices.

\blem \label{evest}
Let $A\in M(n,\C)$ be a Hermitian matrix with eigenvalues $\lambda_1\ge \dots \ge \lambda_n$
and let $B$ be an arbitrary matrix in $M(n,\C)$ with eigenvalues $\beta_1,\dots,\beta_n$ such
that $\mathrm{Re}(\beta_1) \ge \dots \ge \mathrm{Re}(\beta_n)$. Then for a constant $C_n$ (depending
only on the dimension $n$)
$$
\max_{1\le k \le n} |\lambda_k - \beta_k| \le C_n\|A-B\|
$$
\elem
\begin{proof} See \cite[Th.\ 23.3]{B:87}.
\end{proof}

We may utilize Lemma \ref{evest} to obtain a distinguished $n$-tuple of eigenvalues
for any Hermitian matrix over $\Ct$:
\bprop \label{Hermevwelldef}
Let $A=[(A_\eps)_\eps]\in M(n,\Ct)$ be Hermitian.
Let $\lambda_k^\eps=\mu_{k,\eps} + i \nu_{k,\eps}$ ($1\le k\le n$)
be the eigenvalues of $A_\eps$, ordered by the size of their real parts:
$\mu_{1,\eps}\ge \dots \ge \mu_{n,\eps}$. 
Then $\lambda_k:=[(\lambda_k^\eps)_\eps]$ are well-defined elements 
$\lambda_1\ge \dots \ge\lambda_n$ of $\Rt$, independent of the representative
$(A_\eps)_\eps$. 
Any eigenvalue of $A$ is an interleaving of $\{\la_1,\dots,\la_n\}$.
Moreover, there exists a unitary matrix
$U\in M(n,\Ct)$ such that $U^*AU = \text{diag}(\lambda_1,\dots,\lambda_n)$.
\eprop
\begin{proof} Analogously to Lemma \ref{lem:s-s representative} it follows that
$A$ possesses a representative $(A_\eps)_\eps$ consisting entirely of Hermitian matrices.
Denote by $\lambda_1^\eps\ge \dots \ge\lambda_n^\eps$ the eigenvalues
of $A_\eps$. 
For any other representative $(B_\eps)_\eps$ of $A$ with eigenvalues $\beta_1^\eps,\dots,\beta_n^\eps$
ordered by the size of their real parts, Lemma \ref{evest} implies that $(\lambda_k^\eps)_\eps$
and $(\beta_k^\eps)_\eps$ define the same equivalence class in $\Ct$ ($1\le k \le n$).
This shows that $\lambda_1\ge \dots\ge \lambda_n$ are well defined elements of $\Rt$, independent
of the representative $(A_\eps)_\eps$ of $A$. By construction, $\pm\det(A-\la I) = (\la-\la_1)\dots (\la-\la_n)$,
so by Th.\ \ref{vmain} any eigenvalue of $A$ is an interleaving of $\{\la_1,\dots,\la_n\}$.
From the classical theory we know that for each $\eps$ there exists a unitary matrix $U_\eps\in M(n,\C)$
with $U_\eps^* A_\eps U_\eps = \text{diag}(\lambda_1^\eps,\dots,\lambda_n^\eps)$. Setting
$U:=[(U_\eps)_\eps]$ concludes the proof.
\end{proof}

Analogously, for skew-symmetric matrices we obtain:
\begin{prop}\label{hermevprop} Let $A=[(A_\eps)_\eps]\in M(n,\Rt)$ be skew-symmetric. Let
$\theta_k=\mu_{k,\eps} + i \nu_{k,\eps}$ ($1\le k\le n$)
be the eigenvalues of $A_\eps$, ordered by the size of their imaginary parts:
$\nu_{1,\eps}\ge \dots \ge \nu_{n,\eps}$ and set $\theta_k := [(\theta_{k,\eps})_\eps]\in\Ct$ 
($1\le i \le n$). 
Then $\theta_1,\dots,\theta_n$ are well-defined elements of $\Ct$, independent of the
representative $(A_\eps)_\eps$.
They are of the form $\pm i \lambda_k$ ($\lambda_k\in \Rt$). If $n$ is odd, then
at least one $\la_k$ equals $0$. In particular, if $A$ is non-degenerate then $n$ must be even.
Any eigenvalue of $A$ is an interleaving of $\{\theta_1,\dots,\theta_n\}$.
\end{prop}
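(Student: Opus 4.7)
The plan is to reduce everything to the Hermitian case treated in Proposition \ref{Hermevwelldef} by passing from $A$ to the auxiliary Hermitian matrix $iA$, and then to exploit the classical pairing properties of eigenvalues of a real skew-symmetric matrix.

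First I would use Lemma \ref{lem:s-s representative} to choose a representative $(A_\eps)_\eps$ of $A$ consisting entirely of real skew-symmetric matrices. For every such $\eps$ the matrix $iA_\eps$ is Hermitian, since $(iA_\eps)^* = -i A_\eps^t = iA_\eps$. In particular, the eigenvalues of $A_\eps$ are purely imaginary, so that the numbers $\theta_{k,\eps}$ ordered by their imaginary parts take the form $\theta_{k,\eps} = i\lambda_{k,\eps}$ with $\lambda_{k,\eps}\in\R$ and $\lambda_{1,\eps}\ge\dots\ge\lambda_{n,\eps}$.

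Next I would apply Lemma \ref{evest} to the Hermitian matrices $iA_\eps$ and $iB_\eps$, where $(B_\eps)_\eps$ is any other (not necessarily skew-symmetric) representative of $A$: since $(A_\eps - B_\eps)_\eps$ is negligible, so is $\|iA_\eps - iB_\eps\|$, and the estimate from Lemma \ref{evest} yields that the ordered real parts of the eigenvalues of $iA_\eps$ and $iB_\eps$ agree modulo $\cN$. Equivalently, the imaginary-part-ordered eigenvalues $\theta_{k,\eps}$ of $A_\eps$ and of $B_\eps$ define the same class, so $\theta_k=[(\theta_{k,\eps})_\eps]=i\lambda_k$ with $\lambda_k=[(\lambda_{k,\eps})_\eps]\in\Rt$ is independent of the chosen representative. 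Note that since $A_\eps$ is \emph{real} skew-symmetric, its non-real eigenvalues come in complex conjugate pairs $\pm i|\mu|$, which is the source of the ``$\pm$'' in the statement.

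For the odd-dimensional case I would observe that for each $\eps$, $\det(A_\eps)=\det(A_\eps^t)=\det(-A_\eps)=(-1)^n\det(A_\eps)$, so $n$ odd forces $\det(A_\eps)=0$ and hence $0$ is an eigenvalue of $A_\eps$. By the $\pm$ pairing of non-zero eigenvalues, the middle one in the imaginary-part ordering must vanish, i.e., $\lambda_{(n+1)/2,\eps}=0$, whence $\lambda_{(n+1)/2}=0$ in $\Rt$. If in addition $A$ is non-degenerate, then by Lemma \ref{GKOS_Lemma} $\det(A)$ is invertible, contradicting $\det(A_\eps)=0$ for all $\eps$; thus $n$ must be even. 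Finally, by construction $\pm\det(A-\lambda I)=\prod_{k=1}^n(\lambda-\theta_k)$ in $\Ct[\lambda]$, so Theorem \ref{vmain} immediately gives that every eigenvalue of $A$ in the sense of Definition \ref{evdef} is an interleaving of $\{\theta_1,\dots,\theta_n\}$.

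The only mildly subtle step is the application of Lemma \ref{evest}: one must pass from the (not necessarily Hermitian) generic representative $(B_\eps)_\eps$ to the Hermitian matrix $iB_\eps$, check that ordering the eigenvalues of $A_\eps$ by imaginary parts coincides with ordering the eigenvalues of $iA_\eps$ by real parts, and use the numerical estimate on each entry to conclude negligibility componentwise. Everything else is formally parallel to the Hermitian case.
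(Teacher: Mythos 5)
Your proof is correct and takes essentially the same route as the paper: reduce to the Hermitian case via $\tilde A := iA$, apply Lemma~\ref{evest} to obtain representative-independence, use the classical pairing of eigenvalues of real skew-symmetric matrices for the form $\pm i\lambda_k$ and the zero eigenvalue in odd dimension, and conclude the interleaving statement from Theorem~\ref{vmain}. One small wrinkle in the ``mildly subtle step'' you flag: if $\theta_{k,\eps}=i\lambda_{k,\eps}$, then $iA_\eps$ has eigenvalue $i\theta_{k,\eps}=-\lambda_{k,\eps}$, so ordering eigenvalues of $A_\eps$ by \emph{decreasing} imaginary part corresponds to ordering eigenvalues of $iA_\eps$ by \emph{increasing} (not decreasing) real part -- the two orderings are each other's reversals rather than coinciding; since this is a fixed reindexing it does not affect well-definedness, but the statement as you wrote it should be corrected.
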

\begin{proof} Set $\tilde A := iA$. 
Picking a skew-symmetric representative $(A_\eps)_\eps$ of $A$ as in Lemma \ref{lem:s-s representative}, we
obtain a representative $(\tilde A_\eps)_\eps$ of $\tilde A$ with all $\tilde A_\eps$
Hermitian. Denote by $\alpha_1^\eps\ge \dots \ge\alpha_n^\eps$ the eigenvalues
of $\tilde A_\eps$. By the proof of Prop.\ \ref{Hermevwelldef}, the equivalence classes
of the $(\alpha_k^\eps)_\eps$ in $\Ct$ do not depend on the representative of $\tilde A$. 
Consequently, the eigenvalues of 
$A_\eps$ (ordered by the size of their imaginary parts) define 
uniquely determined elements of $\tilde \C$, independent of the chosen 
representative of $A$.
By the classical theory, the eigenvalues of $A_\eps$ come in pairs $\pm i\lambda_k^\eps$ with 
$\lambda_k^\eps\in \R$, plus $0$ in case $n$ is odd. The final claim follows again from Th.\ 
\ref{vmain}.
\end{proof}

Based on this result, we now obtain the following normal forms of skew-symmetric generalized
matrices 

\bthm \label{lem:s-s normal forms}
 Let $A\in M(n,\Rt)$ be skew-symmetric.
 \begin{itemize}
 \item[(i)] There exists an orthogonal matrix $V\in M(n,\Rt)$ such that
 $V^tAV$ is of block-diagonal form $\mathrm{diag}(B_1,\dots,B_k,0,\dots,0)$, with each $B_j$ of the form
 \beq \nonumber \label{eq:s-s nor f}
  B_j=\left(\begin{array}{rr}
 0 & -\lambda_j \\
 \lambda_j & 0  
  \end{array}\right),
 \eeq
 followed by $1\times 1$ blocks of zeros.    Here, $\lambda_1\ge \dots \ge \lambda_k\ge 0$ and
 $\pm i \lambda_j$ ($1\le j\le k$) are the non-zero entries of the $n$-tuple constructed
 in Prop.\ \ref{hermevprop}.
 \item[(ii)] $A$ is non-degenerate if and only if each $\theta_k$ from Prop.\ \ref{hermevprop} is invertible.
 In this case there exists a non-degenerate matrix $V\in M(n,\Rt)$ such that
 \beq \nonumber\label{eq:s-s nondeg nor f}
 V^t A V=\left(\begin{array}{rr}
 0 & -I \\
 I & 0
 \end{array}\right).
 \eeq
 \end{itemize}
\ethm
\begin{proof}
(i) Let $(A_\eps)_\eps$ be a skew-symmetric representative of $A$ (Lemma \ref{lem:s-s representative}).
Then for each $\eps$ there exists an orthogonal matrix $V_\eps\in M(n,\R)$ such that 
$V_\eps^t A_\eps V_\eps$ is of block diagonal form $\mathrm{diag}(B_1^\eps,\dots,B_{k_\eps}^\eps,0,\dots,0)$, 
with each $B_j^\eps$ of the form
 \beq 
  B_j^\eps=\left(\begin{array}{rr}
 0 & -\lambda_j^\eps \\
 \lambda_j^\eps & 0  
  \end{array}\right),
 \eeq
 followed by $1\times 1$ blocks of zeros.
 In this representation, $\lambda_1^\eps\ge \dots \ge \lambda_{k_\eps}^\eps\ge 0$ are such that
 $\pm i \lambda_j^\eps$ ($1\le j\le k_\eps$) are the non-zero eigenvalues of $A_\eps$. 
 By Prop.\ \ref{hermevprop}, $\theta_1,\dots,\theta_n$ are the equivalence classes in $\Ct$ of these eigenvalues. 
 (Note that even if $\theta_j^\eps\not=0$ for all $\eps$,
 $\theta_j$ may equal $0\in \Ct$.) Since $V:=[(V_\eps)_\eps]$ is clearly orthogonal in $M(n,\Rt)$,
the statement follows.

(ii) The first claim is immediate from Lemma \ref{GKOS_Lemma}. If $A$ is invertible, then 
$A$ defines a symplectic form on $\Rt^{n^2}$, and so by Theorem \ref{basisthm} there 
exists a symplectic basis of $\Rt^n$ for $A$. It then suffices to take for $V$
the matrix with the elements of this basis as columns. 
\end{proof}

\begin{rem} An alternative, more intrinsic and general, way of isolating
a distinguished set of eigenvalues, as was done above for 
skew-symmetric and Hermitian matrices, is the following: 
Let $A\in M(n,\Ct)$ be a matrix with characteristic polynomial 
$\pm \det(A-\lambda I) = (\la-\la_1)\dots (\la-\la_n)$ with $\la_1\ge \dots \ge \la_n$. 
Since by Th.\ \ref{vmain} any eigenvalue of A is a finite interleaving of these $\la_k$, 
this determines $\la_1$ uniquely as the largest eigenvalue
of $A$. (In particular, this holds independently of the representative $(A_\eps)_\eps$ of $A$
used to construct $\la_1\ge \dots \ge \la_n$). Then by induction, all $\la_k$ are
uniquely determined: supposing that also 
$\pm \det(A-\lambda I) = (\la-\mu_1)\dots (\la-\mu_n)$ with $\mu_1\ge \dots \ge \mu_n$
then we show that
$(\la-\la_2)\dots (\la-\la_n) = (\la-\mu_2)\dots (\la-\mu_n)$,
from which by the above it will follow that $\mu_2=\la_2$ is the largest root of this polynomial,
etc.

Indeed, we know that the left hand side times $(\la-\la_1)$ equals the right hand
side times $(\la-\la_1)$ (as $\la_1=\mu_1$), so the (generalized complex) coefficients of
these polynomials are equal. This implies that the original polynomials 
are equal (in fact, if $(\la-c)(a_{n-1}\la^{n-1}+\dots+\la a_1+a_0)=0$, then 
$a_{n-1}=a_{n-2}-ca_{n-1}=\dots=a_0-ca_1=-ca_0=0$, so $a_{n-1}=\dots=a_0=0$).
\end{rem}

 \subsection*{Acknowledgment}

 This work was supported by projects P25236, Y-237, and P23714 of the
 Austrian Science Fund and Projects 174024, 174005 of the
 Serbian Ministry of Science, as well as project 114-451-2648/2012 
 of  the Provincial Secretariat for Science of Serbia.  
 We thank an anonymous referee for several suggestions that have substantially
 improved the presentation.


 \end{document}